\numberwithin{equation}{section}
\numberwithin{figure}{section}
\theoremstyle{plain}
\newtheorem{thm}{\protect\theoremname}
\newtheorem*{thm*}{\protect\theoremname}
\theoremstyle{definition}
\newtheorem{defn}[thm]{\protect\definitionname}
\theoremstyle{plain}
\newtheorem{lem}[thm]{\protect\lemmaname}
\theoremstyle{plain}
\newtheorem{prop}[thm]{\protect\propositionname}
\theoremstyle{remark}
\theoremstyle{plain}
\newtheorem{cor}[thm]{\protect\corollaryname}
\theoremstyle{definition}
\newtheorem*{ex*}{\protect\examplename}
\providecommand{\corollaryname}{Corollary}
\providecommand{\definitionname}{Definition}
\providecommand{\lemmaname}{Lemma}
\providecommand{\propositionname}{Proposition}
\providecommand{\remarkname}{Remark}
\providecommand{\theoremname}{Theorem}
\providecommand{\examplename}{Example}
\global\long\def\dd{\textup{d}}
\begin{document}

\title{On the Saito basis and the Tjurina Number for Plane Branches}

\author{Y. Genzmer and M. E. Hernandes}
\thanks{This work has been partially supported by the R\'eseau de Coop\'eration France-Br\'esil. M. E. Hernandes was also partially supported by CNPq.}

\maketitle

\begin{abstract} We introduce the concept of good Saito basis for a plane curve and we explore it to obtain a formula for the minimal Tjurina number in a topological class. In particular, we give a lower bound for the Tjurina number in terms of the Milnor number that allow us to present a positive answer for a question of Dimca and Greuel.
\end{abstract}

\begin{center} Mathematics Subject Classification: Primary 14H50; Secondary 14B05, 32S05.\end{center}

\begin{center} keywords: Plane curves, Tjurina number, Saito basis. \end{center}

\section{Introduction.}

Let $S:\{f=0\}$ be a germ of an irreducible analytic plane curve. An important analytic invariant of $S$ is the Tjurina number $\tau(S)=\dim_{\mathbb{C}}\frac{\mathbb{C}\{x,y\}}{(f)+J(f)}$ where $J(f)$ denotes the Jacobian ideal of $f$.\\
In general, the computation of $\tau(S)$ is not easy. For instance,
we can obtain it consider a Gr\"obner basis for the ideal
$(f)+J(f)$, or alternatively, it is possible to compute $\tau$ by
the dimension of $\frac{J(f):\left ( f\right )}{J(f)}$ (see Theorem
1 in \cite{zbMATH01549534}) that is related with the
$\mathbb{C}\{x,y\}$-module $\Omega^1(S)$ of all germs of
$1$-holomorphic forms $\omega\in\mathbb{C}\{x,y\}\dd
x+\mathbb{C}\{x,y\}\dd y$ such that $f$ divides $\omega\wedge\dd f$.
More precisely, according to K. Saito \cite{MR586450}, $\Omega^1(S)$
is freely generated by two elements $\{\omega_1,\omega_2\}$. It will
be shown that $\tau(S)$ can be expressed from, among other
invariants, the codimension of the ideal $(g_1,g_2)$ where
$\omega_i\wedge \dd f=g_if\dd x\wedge\dd y$.\\
If $L$ denotes a topological class of a plane curve - for instance,
given by the characteristic exponents - then the Milnor number
$\mu=\dim_{\mathbb{C}}\frac{\mathbb{C}\{x,y\}}{J(f)}$ is constant
for any $S:\{f=0\}\in L$ and $\tau_{\min}\leq \tau(S)\leq \mu$.
Generically, an element $S\in L$ is such that $\tau(S)=\tau_{\min}$,
so $\tau_{\min}$ can be express using the topological data that
characterizes $L$. Delorme in \cite{Delorme1978}, presented a
formula to compute the generic dimension $d(\beta_0,\beta_1)$ of the
moduli space for an irreducible plane curve with characteristic
exponents $(\beta_0,\beta_1)$. As $d(2,\beta_1)=0$ and
$d(\beta_0,\beta_1)=\frac{(\beta_0-3)(\beta_1-3)}{2}+\left [
\frac{\beta_0}{\beta_1}\right ]-1-\mu+\tau_{\min}$ (see
\cite{laudal-martin-pfister-below}) we can compute the minimal
Tjurina number for this topological class. On the other hand,
Peraire in \cite{Peraire} developed an algorithm to compute
$\tau_{\min}$ by means of a flag of $J(f)$.\\
In this paper we present a way to express the difference $\mu-\tau$ for a singular irreducible plane curve $S$ when $\Omega^1\left(S\right)$ admits a basis $\{\omega_1,\omega_2\}$ of special kind, that we call a \emph{good} Saito basis (see Definition \ref{goodbasis}).\\
More specifically, we present a formula (see Theorem \ref{lem:Formula}) to compute the difference between $\mu(S)-\tau(S)$ and $\mu(\widetilde{S})-\tau(\widetilde{S})$ where $\widetilde{S}$ denotes the strict transform of $S$.\\
If $S$ is generic in $L$, then, according to \cite{YoyoBMS}, $S$ admits a good basis and this fact allows us to obtain a formula to compute $\tau_{\min}$ in $L$ by the sole topological data: the sequence of multiplicities in the canonical resolution or the characteristic exponents for instance. In particular, for irreducible plane curves, we are able to present a
lower bound for the minimum Tjurina number in $L$ in terms of the Milnor number that allow us to give an
affirmative answer to a question of Dimca and Greuel \cite{Dimca-Greuel} about the inequality $4\tau >3\mu$ and obtained simultaneously by Alberich-Carrami\~nana {\it et al.} in \cite{dimca-conj} published in ArXiv a few days before the first version of this paper.\\
The paper is organized as follows. In the section 2 we present some general properties of a Saito basis. The concept of a good Saito basis is introduced in the section 3 and its properties as well. The section 4 is devoted to the formula for the minimal Tjurina number, a lower bound for the Tjurina number using the Milnor number and consequently an answer to the Dimca-Greuel question.
\section{The Saito basis.}
Let $S:\{f=0\}$ be a germ of an analytic plane curve and consider
the $\mathbb{C}\{x,y\}$-module $\Omega^1(S)$ of all germs of
$1$-holomorphic forms $\omega\in\mathbb{C}\{x,y\}\dd
x+\mathbb{C}\{x,y\}\dd y$ such that $f$ divides $\omega\wedge\dd f$.
It is equivalent to require that the foliation induced by $\omega$
lets invariant $S$. Saito in \cite{MR586450} shows that
$\Omega^1(S)$ is a free module of rank $2$ and a basis of
$\Omega^1(S)$ is called a Saito basis.\\
It is not trivial to obtain a Saito basis, but there is a simple
criterion to verify if $\left\{ \omega_{1},\omega_{2}\right\}$ is a
basis for $\Omega^1(S)$ (see Theorem, page 270 in \cite{MR586450}).
\begin{thm*}[Saito criterion] The set $\left\{
\omega_{1},\omega_{2}\right\}$ is a Saito basis for $S:\{f=0\}$ if
and only if $\omega_1\wedge\omega_2=uf\dd x\wedge\dd y$, where $u$
is a unit in $\mathbb{C}\{x,y\}$.
\end{thm*}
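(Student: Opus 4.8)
The plan is to reduce the whole statement to a single divisibility lemma and then handle the two implications by linear algebra over $\mathbb{C}\{x,y\}$. Throughout I would write $\omega_i=a_i\,\dd x+b_i\,\dd y$, collect the coefficients in the matrix $M=\begin{pmatrix}a_1&b_1\\a_2&b_2\end{pmatrix}$, so that $\omega_1\wedge\omega_2=\det(M)\,\dd x\wedge\dd y$, and write $\dd f=f_x\,\dd x+f_y\,\dd y$. The membership $\omega_i\in\Omega^1(S)$ says exactly that $a_if_y-b_if_x=fc_i$ for some $c_i\in\mathbb{C}\{x,y\}$. (I would stress that both $\omega_i$ must be assumed to lie in $\Omega^1(S)$, since being a Saito basis means being a basis of $\Omega^1(S)$: for instance $\{\dd x,\,f\,\dd y\}$ satisfies $\omega_1\wedge\omega_2=f\,\dd x\wedge\dd y$ yet is not a basis of $\Omega^1(S)$, as $\dd x\notin\Omega^1(S)$ in general.)

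The crux, which I would isolate as a lemma, is: \emph{for any two forms $\omega,\omega'\in\Omega^1(S)$, the function $D$ defined by $\omega\wedge\omega'=D\,\dd x\wedge\dd y$ is divisible by $f$.} To prove it I would read the two membership relations as a linear system in the unknowns $(f_x,f_y)$, with coefficient matrix built from the $a_i,b_i$ and right-hand side a multiple of $f$; Cramer's rule then yields $f\mid f_xD$ and $f\mid f_yD$. The final and genuinely essential step is to cancel the partial derivatives: since $S$ is reduced (in particular in the irreducible case of interest) its singular locus is $0$-dimensional, so $f,f_x,f_y$ share no common factor, and working factor by factor in the UFD $\mathbb{C}\{x,y\}$ upgrades $f\mid f_xD$ and $f\mid f_yD$ to $f\mid D$. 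I expect this reducedness cancellation to be the main obstacle: it is the one place where the geometric hypothesis, rather than formal manipulation, enters.

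Granting the lemma, sufficiency ($\Leftarrow$) is short. Linear independence over $\mathbb{C}\{x,y\}$ is immediate, since a nontrivial relation $\alpha\omega_1+\beta\omega_2=0$ would force $\det(M)=0$, contradicting $\det(M)=uf\neq0$. For generation, given $\eta=a\,\dd x+b\,\dd y\in\Omega^1(S)$ I would solve $\eta=\alpha\omega_1+\beta\omega_2$ by inverting $M$; Cramer's rule gives $\alpha=(\eta\wedge\omega_2)/(uf\,\dd x\wedge\dd y)$ and $\beta=-(\eta\wedge\omega_1)/(uf\,\dd x\wedge\dd y)$, and each numerator is divisible by $f$ by the lemma while $u$ is a unit, so $\alpha,\beta\in\mathbb{C}\{x,y\}$. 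Hence $\{\omega_1,\omega_2\}$ generates $\Omega^1(S)$ and is a basis.

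For necessity ($\Rightarrow$), suppose $\{\omega_1,\omega_2\}$ is a basis. The lemma already gives $\det(M)=vf$ for some $v\in\mathbb{C}\{x,y\}$, and it remains to see that $v$ is a unit. I would expand the distinguished elements $f\,\dd x,\,f\,\dd y\in\Omega^1(S)$ in the basis; assembling the resulting coefficients into a matrix $P$ gives $PM=fI$, whence $P=\frac{1}{v}\,\mathrm{adj}(M)$ must have holomorphic entries, forcing $v\mid a_i,b_i$, i.e. $v\mid\omega_1,\omega_2$. Expanding $\dd f\in\Omega^1(S)$ in the basis then shows $v\mid f_x$ and $v\mid f_y$, while $\det(P)\,v=f$ gives $v\mid f$; thus $v$ divides $\gcd(f,f_x,f_y)$, which is a unit by reducedness, so $v$ is a unit and $\det(M)=vf$ has the required form. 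The two implications together establish the criterion.
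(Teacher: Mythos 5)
Your proof is correct. There is, however, no in-paper argument to compare it against: the paper states the Saito criterion without proof, importing it from Saito's original article (the Theorem on p.~270 of the cited reference), so the relevant comparison is with the classical argument --- and yours is, in substance, exactly the two-variable specialization of it. The key divisibility lemma you isolate (for $\omega,\omega'\in\Omega^1(S)$, the coefficient $D$ in $\omega\wedge\omega'=D\,\dd x\wedge\dd y$ is divisible by $f$, proved by reading the two cofactor identities as a linear system in $(f_x,f_y)$, applying Cramer's rule to get $f\mid Df_x$ and $f\mid Df_y$, and cancelling via reducedness, since no irreducible factor of a reduced $f$ can divide both $f_x$ and $f_y$) is precisely the determinant computation at the heart of Saito's treatment of logarithmic forms, and both implications then follow by the linear algebra over $\mathbb{C}\{x,y\}$ you describe. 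Two further points: (i) you are right, and it is worth crediting, that the backward implication needs the standing hypothesis $\omega_1,\omega_2\in\Omega^1(S)$, which the paper's phrasing leaves implicit; your counterexample $\{\dd x,\,f\,\dd y\}$ is valid, since $f\,\dd y$ always lies in $\Omega^1(S)$ (as $f\,\dd y\wedge\dd f=-ff_x\,\dd x\wedge\dd y$) while $\dd x$ lies in it only if $f\mid f_y$, which fails for singular $f$; (ii) in the necessity direction you should insert the one-line remark that $PM=fI$ gives $\det P\cdot\det M=f^2\neq 0$, hence $v\neq 0$, before writing $P=\frac{1}{v}\,\mathrm{adj}(M)$; with that noted, your cancellation $\det P\cdot v=f$, followed by $v\mid f_x,f_y$ from expanding $\dd f$ and the conclusion $v\mid\gcd(f,f_x,f_y)=1$, is sound for the reduced (in the paper, irreducible) germs under consideration.
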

This criterion can be interpreted as follows : $\left\{
\omega_{1},\omega_{2}\right\}$ is a basis for $\Omega^1(S)$ if the tangency locus between the two forms reduces to $S$.\\
Below, we present some examples of Saito basis for $S:\{f=0\}$. All of them will illustrate, in the sequel, various sensitivities of the Saito basis with respect to small perturbations of the curve $S$. In the whole article, we will keep the same numbering of the examples for the convenience of the reader.
\begin{ex*}[\textbf{1}]
The simplest case is when $f=y^p-x^q$, that is $S_1:\{f=0\}$ is
quasi-homogeneous. In fact, if $\omega_1=qy\dd x- px\dd y$ and
$\omega_2=\dd f$, then $$\omega_1\wedge\omega_2=pqf\dd x\wedge\dd
y$$ and $\{\omega_1,\omega_2\}$ is a basis for $\Omega^1(S_1)$.
\end{ex*}
\begin{ex*}[\textbf{2}]
If $f=y^5-x^6+x^4y^3$ then $S_2:\{f=0\}$ is topologically
quasi-homogeneous, that is, $S_2$ presents characteristic exponents
$(5,6)$, but not analytically equivalent to $y^5-x^6=0$. One can
show that the set $\{\omega_1,\omega_2\}$ where
\begin{eqnarray*}
\omega_1&=& \left ( -6xy+\frac{16}{15}x^3y^2-\frac{8}{5}xy^5\right
)\dd x+\left ( 5x^2+\frac{4}{3}y^3+\frac{4}{5}x^2y^4\right )\dd y\\
\omega_2&=&\left ( -6y^2+\frac{8}{5}x^4-\frac{12}{5}x^2y^3\right )\dd
x+\left ( 5xy+\frac{6}{5}x^3y^2 \right )\dd y
\end{eqnarray*}
satisfy $\omega_1\wedge\omega_2=8f\dd x\wedge\dd y$, so
$\{\omega_1,\omega_2\}$ is a Saito basis for $\Omega^1(S_2)$.
\end{ex*}
\begin{ex*}[\textbf{3}]
The curve $S_3:\{f=0\}$ with $f=y^5-x^{11}+x^6y^3$ is topologically
equivalent to the any curve with characteristic exponents $(5,11)$
and its strict transform is $S_2$. The set $\{\omega_1,\omega_2\}$
where
\begin{eqnarray*}
\omega_1&=& \left ( 605y^2+198xy^3-88x^6\right
)\dd x-\left ( 275xy+66x^2y^2\right )\dd y\\
\omega_2&=&\left ( 605x^4y+150x^5y^2\right )\dd x-\left (
40y^3+275x^5+90x^6y\right )\dd y
\end{eqnarray*}
satisfy $\omega_1\wedge\omega_2=(-24200-7920xy)f\dd x\wedge\dd y$,
so $\{\omega_1,\omega_2\}$ is a Saito basis for $\Omega^1(S_3)$.
\end{ex*}
\begin{ex*}[\textbf{4}]
The class of curve with characteristic exponents the form $(n,n+1)$
has been extensively studied by Zariski \cite{zariski}. The curve
$S_4$ given by $$f=y^7-x^8-7x^6y^2-\frac{147}{8}x^4y^4$$ that,
belongs to the latter class, will be shown of a peculiar interest.
The forms
\begin{eqnarray*}
\omega_1&=& \left(
8x^2y-\frac{147}{8}x^4-\frac{3087}{4}x^2y^2-\frac{21609}{16}y^4\right
)\dd x+\\
&& +\left(
-7x^3+\frac{7}{4}xy^2+\frac{64827}{64}xy^3+\frac{5145}{8}x^3y\right
)\dd y\\
\omega_2&=&\left ( 8xy^2+\frac{1029}{8}x^3y\right )\dd x+\left (-7x^2y+\frac{7}{4}y^3-\frac{1029}{8}x^4 \right )\dd y.
\end{eqnarray*}
produce a Saito basis for $\Omega^1(S_4)$ because
$\omega_1\wedge\omega_2=-\frac{151263}{64}f\dd x\wedge\dd y$.
\end{ex*}
Given a $1$-form $\omega=A\dd x+B\dd y$ we denote by
$\nu(\omega)=\min\{\nu(A),\nu(B)\}$ its algebraic multiplicity,
where $\nu(H)$ indicates the multiplicity of $H\in\mathbb{C}\{x,y\}$
at $(0,0)\in\mathbb{C}^2$.\\
Among all the possible basis $\left\{ \omega_{1},\omega_{2}\right\}$
for $\Omega^1(S)$ we choose some that maximizes the sum
$\nu(\omega_1)+\nu(\omega_2)$ that, following the Saito criterion,
cannot be bigger than $\nu=\nu(f)=\nu\left(S\right)$. For such basis
we denote
\[
\nu_{1}:=\nu\left(\omega_{1}\right)\qquad\nu_{2}:=\nu\left(\omega_{2}\right).
\]
The following result is immediate and identify a new analytical
invariant of $S$.
\begin{prop}
The couple $\left(\nu_{1},\nu_{2}\right)$, up to order, is an
analytical invariant of $S$.
\end{prop}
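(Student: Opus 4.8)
The plan is to show that the two numbers $\nu_{1}\le\nu_{2}$ are each determined by the module $\Omega^{1}(S)$ alone, so that they cannot depend on the particular maximizing basis chosen, and then to verify that an analytic isomorphism carries $\Omega^{1}(S)$ onto the corresponding module of the image curve while preserving the multiplicity of forms. Throughout I write $\mathcal{O}=\mathbb{C}\{x,y\}$ with maximal ideal $\mathfrak{m}$, and I use the elementary facts $\nu(gh)=\nu(g)+\nu(h)$ and $\nu(g+h)\ge\min(\nu(g),\nu(h))$ for $g,h\in\mathcal{O}$, together with their form versions $\nu(g\omega)=\nu(g)+\nu(\omega)$ and $\nu(\omega+\eta)\ge\min(\nu(\omega),\nu(\eta))$, all of which follow by inspecting lowest-degree homogeneous parts coordinatewise. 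I set $\delta_{1}:=\min\{\nu(\omega):0\ne\omega\in\Omega^{1}(S)\}$, a well-defined non-negative integer since $\dd f\in\Omega^{1}(S)$ is nonzero.

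First I would establish the key observation that in every Saito basis $\{\omega_{1},\omega_{2}\}$ the smaller multiplicity equals $\delta_{1}$. Indeed, any $\omega\in\Omega^{1}(S)$ may be written $\omega=g_{1}\omega_{1}+g_{2}\omega_{2}$ with $g_{i}\in\mathcal{O}$, whence $\nu(\omega)\ge\min_{i}\nu(g_{i}\omega_{i})\ge\min(\nu(\omega_{1}),\nu(\omega_{2}))$; minimizing over $\omega$ and using $\omega_{1},\omega_{2}\in\Omega^{1}(S)$ gives $\min(\nu(\omega_{1}),\nu(\omega_{2}))=\delta_{1}$, so the smaller multiplicity is the same for every basis. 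Next, the Saito criterion yields $\omega_{1}\wedge\omega_{2}=uf\,\dd x\wedge\dd y$ with $u$ a unit, and since the multiplicity of the coefficient of $\omega_{1}\wedge\omega_{2}$ is at least $\nu(\omega_{1})+\nu(\omega_{2})$ one gets $\nu(\omega_{1})+\nu(\omega_{2})\le\nu(f)$. Hence the sums $\nu(\omega_{1})+\nu(\omega_{2})$ over all Saito bases form a nonempty set of integers bounded above by $\nu(f)$, so their maximum $\Sigma$ is attained; any maximizing basis then satisfies $\nu_{1}=\delta_{1}$ and $\nu_{2}=\Sigma-\delta_{1}$. As $\delta_{1}$ and $\Sigma$ depend only on $\Omega^{1}(S)$, the unordered pair $\{\nu_{1},\nu_{2}\}$ is independent of the maximizing basis.

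It then remains to check invariance under an analytic isomorphism $\phi\colon(\mathbb{C}^{2},0)\to(\mathbb{C}^{2},0)$. The pullback $\phi^{*}$ is an $\mathcal{O}$-linear isomorphism sending $\Omega^{1}(S)$ onto $\Omega^{1}(\phi^{-1}(S))$, hence Saito bases to Saito bases. The step requiring the most care — and the only genuine obstacle — is that $\phi^{*}$ preserves the multiplicity of each form: expressing $\phi^{*}\omega$ in the coframe $\dd x,\dd y$, its coefficient vector is the product of the transpose Jacobian of $\phi$, invertible over $\mathcal{O}$, with $(A\circ\phi,B\circ\phi)$, and since $\phi$ fixes the origin one has $\nu(A\circ\phi)=\nu(A)$, $\nu(B\circ\phi)=\nu(B)$, while multiplication by a matrix invertible over $\mathcal{O}$ preserves the minimum of the coordinatewise multiplicities. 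Thus $\nu(\phi^{*}\omega)=\nu(\omega)$, so $\phi^{*}$ preserves both $\delta_{1}$ and $\Sigma$, and therefore the pair $(\nu_{1},\nu_{2})$. Combined with the well-definedness established above, this proves that $(\nu_{1},\nu_{2})$, up to order, is an analytic invariant of $S$.
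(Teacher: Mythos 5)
Your proof is correct: identifying the unordered pair as $\{\delta_{1},\Sigma-\delta_{1}\}$, where $\delta_{1}$ is the minimal order of a nonzero element of $\Omega^{1}(S)$ and $\Sigma$ the maximal sum of orders over Saito bases, and then checking that a biholomorphism fixing the origin induces an order-preserving (semilinear) isomorphism of these modules, is precisely the argument the paper has in mind when it declares the result ``immediate'' and gives no proof. Your write-up even records slightly more than is needed, namely that $\min\left(\nu(\omega_{1}),\nu(\omega_{2})\right)=\delta_{1}$ holds for \emph{every} Saito basis, maximizing or not.
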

Remark that the pair $\left(\nu_{1},\nu_{2}\right)$ is not a
topological invariant. For instance, following the examples above, for
$S_1$ with $p=5$ and $q=6$ we have $\left(\nu_{1},\nu_{2}\right)=(1,4)$.
But the curve $S_3$ which is topological equivalent to
$S_1$ has corresponding
pair of multiplicities $(2,2)$.\\
From now on, we consider $S:\{f=0\}$ singular and irreducible (a
plane branch) with a Saito basis $\left\{
\omega_{1},\omega_{2}\right\}$ such that $\omega_{i} =A_{i}\dd
x+B_{i}\dd y$ In particular, we have
\begin{equation}\label{eq:01}
A_{1}B_{2}-A_{2}B_{1} = uf\ \ \ \ \mbox{and}\ \ \ \
A_{i}\frac{\partial f}{\partial y}-B_{i}\frac{\partial f}{\partial
x} = g_{i}f
\end{equation}
where $u(0,0)\neq 0$ and $g_{i}\in\mathbb{C}\{x,y\}$ is called the
cofactor of $\omega_{i}$.\\
Applying a generic linear change of coordinates if necessary, we can
suppose that for $i=1,2$, one has
$\nu\left(A_{i}\right)=\nu\left(B_{i}\right)=\nu_{i} $ and in this
coordinates $\left(x,y\right)$ the tangent cone of $f$, i.e. its
$\nu$-jet, is $f^{(\nu)}=\left(y+\epsilon x\right)^{\nu}$.
\begin{ex*}[\textbf{1}] Consider the irreducible curve $S_1$. Suppose by symmetry that $p<q$, we have
$\nu(A_1)=\nu(B_1)=\nu_1=1$ but $q-1=\nu(A_2)>p-1=\nu(B_2)=\nu_2$.
Consider the change of coordinates $T(x,y)=(x,y-\epsilon x)$ with
$\epsilon\neq 0$ we obtain $f_1=T^*(f)=(y-\epsilon x)^p-x^q$ and the
Saito basis $\eta_1=T^*(\omega_1)$ and $\dd f_1$
\begin{eqnarray*}
\eta_1&=&(q(y-\epsilon x)+\epsilon
px)\dd x-px\dd y\\
\dd f_1&=&(-\epsilon p(y-\epsilon
x)^{p-1}-qx^{q-1})\dd x+p(y-\epsilon x)^{p-1}\dd y
\end{eqnarray*}
 satisfying the above
condition. In addition, $\eta_1\wedge\dd f_1=pqf_1\dd x\wedge\dd y$,
that is, $g_1=pq$ and $g_1=0$.
\end{ex*}
\begin{ex*}[\textbf{2}]
For the curve $S_2$, we have
\[ \omega_1\wedge\dd f=(-30x-8xy^4)f\dd x\wedge\dd y
\ \ \mbox{and}\ \ \omega_2\wedge\dd f=(-30y-12x^2y^2)f\dd x\wedge\dd
y,
\]
that is, $g_1=-30x-8xy^4$ and $g_2=-30y-12x^2y^2$.
\end{ex*}
\begin{ex*}[\textbf{3}]
Considering the curve $S_3$, we have $\nu(A_1)=\nu(B_1)=2$ but
$5=\nu(A_2)>\nu(B_2)=3$. By the change of coordinates
$T(x,y)=(x,x+y)$ we obtain $f_1=T^*(f)=(y+x)^5-x^{11}+x^6(y+x)^3$
and $\eta_i = T^*(\omega_i)  =  \left ( A_i+B_i\right )\dd x+ B_i\dd
y$  with $\nu(A_1+B_1)=\nu(B_1)=2$ and $\nu(A_2+B_2)=\nu(B_2)=3$. In
addition,
\[ \eta_1\wedge\dd f=(3025(x+y)+990x(y+x)^2)f_1\dd x\wedge\dd y
\]
\[ \eta_2\wedge\dd f=(3025x^4+990x^5(y+x))f_1\dd x\wedge\dd
y,
\]
consequently, $g_1=3025(x+y)+990x(y+x)^2$ and
$g_2=3025x^4+990x^5(y+x)$.
\end{ex*}
\begin{ex*}[\textbf{4}]
Finally, for $S_4$ we find
\begin{eqnarray*}
\omega_1\wedge\dd f&=&\left (56x^2-\frac{151263}{16}y^3-\frac{21609}{4}x^2y\right )f\dd x\wedge\dd y\\
\omega_2\wedge\dd f&=&(56xy+1029x^3)f\dd x\wedge\dd y.
\end{eqnarray*}
\end{ex*}

Notice that any generator $\omega_i$ in a Saito basis
$\{\omega_1,\omega_2\}$ has an isolated
singularity, that is, $\gcd(A_i,B_i)=1$. In addition, by
(\ref{eq:01}), we have that $\nu(g_i)\geq \nu_i-1$.
\section{Good Saito basis and the Tjurina number for $S$.}
As we mentioned before, given a Saito basis
$\{\omega_1,\omega_2\}$ for $\Omega^1(S)$ we get $\nu_1+\nu_2\leq
\nu$. In \cite{YoyoBMS}, the first author shows the following theorem:
\begin{thm*}[Generic Basis Theorem] In a fixed
topological class $L$, generically any curve $S$ admits a Saito basis
satisfying
\begin{center}
\begin{tabular}{rl}
$\nu_1=\nu_2=\frac{\nu}{2}$ & if $\nu=\nu(S)$ is even \\ \\
$\nu_1=\nu_2-1=\frac{\nu-1}{2}$ & if $\nu=\nu(S)$ is odd.
\end{tabular}
\end{center}
\end{thm*}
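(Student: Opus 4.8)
The plan is to establish the statement by a genericity (semicontinuity) argument that reduces everything to producing a single balanced representative in each class. First I would record the elementary constraints. Ordering the basis so that $\nu_1\le\nu_2$, the Saito criterion gives $\nu(\omega_1\wedge\omega_2)=\nu(uf)=\nu$, whence $\nu_1+\nu_2\le\nu$ and therefore $\nu_1\le\lfloor\nu/2\rfloor$. Next I would note that $\nu_1(S)=\min\{\nu(\omega):0\neq\omega\in\Omega^1(S)\}$ is intrinsic: since each generator lies in the module one has $\nu_1\le\nu(\omega_i)$, and any minimal-multiplicity element, written in a maximizing basis, has multiplicity at least $\min(\nu_1,\nu_2)=\nu_1$, so the two numbers agree. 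In an equisingular (hence flat) family $\{S_t\}$ inside the fixed class $L$, the function $t\mapsto\nu_1(S_t)$ is then lower semicontinuous: a low-multiplicity invariant form can only appear on a special fibre, as the Euler-type form does for the quasi-homogeneous model, never disappear under specialization. The same applies to the defect $\nu-(\nu_1+\nu_2)$, which is upper semicontinuous. Consequently the locus in the moduli of $L$ on which $(\nu_1,\nu_2)$ attains its most balanced admissible value is Zariski-open and dense, and the theorem reduces to exhibiting one curve of $L$ whose maximizing Saito basis is balanced.

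The second step is a leading-term analysis that identifies the admissible values and pins down what \emph{balanced} must mean. After the normalization $f^{(\nu)}=(y+\epsilon x)^{\nu}$, any $\omega\in\Omega^1(S)$ has a leading homogeneous form preserving the tangent line, so its $\dd x$-coefficient is divisible by $y+\epsilon x$; writing $\omega_i^{(\nu_i)}$ for the two leading forms, the Saito criterion at lowest order reads $\omega_1^{(\nu_1)}\wedge\omega_2^{(\nu_2)}=c\,(y+\epsilon x)^{\nu}\,\dd x\wedge\dd y$ with $c\neq0$ precisely when $\nu_1+\nu_2=\nu$, the wedge vanishing (the leading forms being $\mathbb{C}$-proportional) exactly when $\nu_1+\nu_2<\nu$. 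This dichotomy shows that for $\nu$ even the single condition $\nu_1=\nu/2$ already forces $\nu_2=\nu/2$ and full sum, since $\nu_2\ge\nu_1=\nu/2$ and $\nu_2\le\nu-\nu_1=\nu/2$. For $\nu$ odd one must secure, beyond $\nu_1=(\nu-1)/2$, a completing form of multiplicity exactly $(\nu+1)/2$ whose leading part is independent from that of $\omega_1$; the alternative $\nu_2=(\nu-1)/2$ (leading terms aligning, as happens for the special curve $S_2$) is the degeneration that genericity must exclude.

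The crux, and the step I expect to be the main obstacle, is the construction of a balanced representative in each topological class, equivalently the proof that for the generic curve no unexpected invariant form of multiplicity below $\lfloor\nu/2\rfloor$ survives and that the leading forms do not accidentally align. I would approach this by induction on the number of blow-ups in the minimal resolution, relating a Saito basis of $S$ to one of its strict transform $\widetilde{S}$ and showing that the generic lift either preserves balance or creates it, the base case being a smooth germ, for which $(\nu_1,\nu_2)=(0,1)$ is already balanced. A more hands-on alternative is a controlled deformation of the quasi-homogeneous model $y^{p}-x^{q}$, whose basis $(1,p-1)$ is maximally unbalanced: adding generic higher-order terms should destroy the Euler-type multiplicity-one form and redistribute the multiplicities. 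In either route the delicate point is a dimension count in the space of invariant forms, namely proving that the cohomological obstruction to lowering $\nu_1$ is generically nonzero; this is where the genuine moduli-theoretic content of the Generic Basis Theorem resides, and it is the reason the result is imported from \cite{YoyoBMS} rather than proved by the elementary reductions above.
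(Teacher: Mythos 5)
You have not proved the theorem: your argument reduces it to exhibiting, in each topological class, one curve with a balanced Saito basis --- equivalently, to showing that for the generic curve there is no invariant $1$-form of multiplicity $<\left[\nu/2\right]$ and, when $\nu$ is odd, that the maximal sum $\nu_{1}+\nu_{2}=\nu$ is actually attained rather than the degeneration $\nu_1=\nu_2=\frac{\nu-1}{2}$ --- and then you explicitly defer precisely this step to \cite{YoyoBMS}. Since that is the entire content of the statement (the paper itself imports the theorem from \cite{YoyoBMS} and contains no proof of it), what you have written is a framing plus a citation, not a proof. Neither of your two suggested routes is carried out. The induction on blow-ups cannot be naive: the paper's Example 3 shows that the good-basis property is not preserved under blowing-up ($S_3$ has a good basis while its strict transform $S_2$ admits none), so the inductive step relating a Saito basis of $S$ to one of $\widetilde{S}$ is exactly where the difficulty sits. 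The deformation of $y^{p}-x^{q}$ likewise requires the dimension count in the space of invariant forms --- the generic nonvanishing of the obstruction to lowering $\nu_1$ --- which you name as the crux and leave open.

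The scaffolding also has unproved steps and small inaccuracies. The lower semicontinuity of $t\mapsto\nu_{1}(S_t)$ and the upper semicontinuity of the defect $\nu-(\nu_{1}+\nu_{2})$ along an equisingular family are asserted, not established; one needs a relative version of $\Omega^{1}(S)$ over the stratum with a coherence or flatness statement, and to pass from ``Zariski-open'' to ``dense'' you additionally need irreducibility of the equisingular stratum, which you never address. In the leading-term analysis, the cofactor relation at lowest order gives that $A^{(\nu(\omega))}-\epsilon B^{(\nu(\omega))}$ is divisible by $y+\epsilon x$ (compare the computation in Lemma \ref{i-v1}), not that the $\dd x$-coefficient $A^{(\nu(\omega))}$ itself is; and when $\nu_{1}+\nu_{2}<\nu$ the vanishing $\omega_{1}^{(\nu_{1})}\wedge\omega_{2}^{(\nu_{2})}=0$ only makes the leading forms proportional up to homogeneous polynomial factors, not $\mathbb{C}$-proportional unless $\nu_{1}=\nu_{2}$. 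Your intrinsic characterization of $\nu_1$ as the minimal multiplicity in $\Omega^1(S)$, and the observation that for even $\nu$ the single equality $\nu_{1}=\nu/2$ forces balance while for odd $\nu$ an extra independence condition must be secured (as $S_2$ shows), are correct and useful --- but they only sharpen the statement; its substance remains exactly where you started, in \cite{YoyoBMS}.
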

Notice that, generically $\nu_1+\nu_2$ is maximum. Of course,
Example $1$ shows that we can obtain $\nu_1+\nu_2=\nu$
in other cases. This motives the following definition.
\begin{defn}\label{goodbasis}
We say that $S$ (or $\Omega^1(S)$) admits a \emph{good basis} if $\nu_{1}+\nu_{2}=\nu$.
\end{defn}
This section is devoted to present some properties of a good basis.
One of them is related with the index $\mathfrak{i}\left(S\right)$
we introduce in the sequel.\\
Let $E$ be the standard blowing-up of the origin in $\mathbb{C}^{2}$
with coordinates $\left(x,y\right)$ and suppose that, in the chart
$\left(x_{1},y_{1}\right)$ such that
$E\left(x_{1},y_{1}\right)=\left(x_{1},x_{1}y_{1}\right)$, the strict
transform of $S$ goes through $(0,y_{1})$.
\begin{defn} For any $\omega=A\dd x+B\dd y\in\Omega^{1}\left(S\right)$,
we denote by $i\left(\omega\right)\in\mathbb{N}\cup\left\{ \infty\right\}$
the valuation given by
\[
i\left(\omega\right)=\nu_{y_{1}=-\epsilon}\left(A^{\left(\nu\left(\omega\right)\right)}\left(1,y_{1}\right)+y_{1}B^{\left(\nu\left(\omega\right)\right)}\left(1,y_{1}\right)\right)
\]
where $\nu_{y_{1}=-\epsilon}(G)$ denotes de multiplicity of $G\in\mathbb{C}\{y_1\}$ at $-\epsilon\in\mathbb{C}$.\\
Moreover, we denote by $\mathfrak{i}\left(S\right)\in\mathbb{N}$
the integer
\[
\mathfrak{i}\left(S\right)=\min_{\omega\in\Omega^{1}\left(S\right)}i\left(\omega\right).
\]
\end{defn}
The value $i\left(\omega\right)$ is nothing but the index $\rm{Ind}(\mathcal{F},C,0)$ introduced in \cite{camacho} for a germ of foliation $\mathcal{F}$ having $C$ as a smooth invariant curve.\\
Notice that for a given $\omega$, the index $i\left(\omega\right)$
is infinite if and only if $\omega$ is dicritical, that is,
$
A^{\nu\left(\omega\right)}\left(1,y_{1}\right)+y_{1}B^{\nu\left(\omega\right)}\left(1,y_{1}\right)=0.
$\\
However, for any curve $\mathfrak{i}\left(S\right)$ is finite. Indeed,
if $f$ is a reduced equation for $S$ then $\dd f$ belongs to $\Omega^{1}\left(S\right)$
and it is not dicritical, thus $\mathfrak{i}\left(S\right)\leq i\left(\dd f\right)<\infty$. In particular, if $\omega\in\Omega^1(S)$ is non dicritical, then $i(\omega)\leq \nu(\omega)+1$.
\begin{ex*}[\textbf{1}]
For $S_1$ the considered Saito basis is a good basis. Moreover,
$i\left(\omega_1\right)=1$ and $i\left(\omega_2\right)=p$.
\end{ex*}
\begin{ex*}[\textbf{2}]
Having a good basis is a property sensitive to perturbation. Indeed, for instance, the basis of $S_3$ computed in the example is not good, and actually $S_3$ does not admit any good basis. Besides that, we have $i\left(\omega_1\right)=1$ and
$i\left(\omega_2\right)=2$.
\end{ex*}
\begin{ex*}[\textbf{3}]
Good basis is not preserved by blowing-up. In fact, $S_3$ has a good
basis, but its strict transform is analytically equivalent to $S_2$
that does not admit good basis. For $S_3$ we have
$i\left(\omega_1\right)=2$ and $i\left(\omega_2\right)=4$.
\end{ex*}
\begin{ex*}[\textbf{4}]
Finally, $S_4$ does not have a good basis. We find
 $i\left(\omega_1\right)=1$ and $i\left(\omega_2\right)=2$.
\end{ex*}
The next result shows that if $S$ admits a good basis, the index
$\mathfrak{i}\left(S\right)$ is achieved for one of its elements.
\begin{prop}
If $S$ admits a good basis $\left\{ \omega_{1},\omega_{2}\right\}$
then
\[
\mathfrak{i}\left(S\right)=\min\left\{ i\left(\omega_{1}\right),i\left(\omega_{2}\right)\right\} .
\]
\end{prop}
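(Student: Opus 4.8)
The plan is to exploit that $\{\omega_1,\omega_2\}$ is a basis of the free $\mathbb{C}\{x,y\}$-module $\Omega^1(S)$: every $\omega\in\Omega^1(S)$ can be written $\omega=a\omega_1+b\omega_2$ with $a,b\in\mathbb{C}\{x,y\}$. Since $\omega_1,\omega_2$ themselves belong to $\Omega^1(S)$ we already have $\mathfrak{i}(S)\le\min\{i(\omega_1),i(\omega_2)\}$, so the whole statement reduces to the inequality $i(\omega)\ge\min\{i(\omega_1),i(\omega_2)\}$ for every such $\omega$. I would first record the convenient reading of the index: writing $\omega=A\dd x+B\dd y$, the polynomial $A^{(\nu(\omega))}(1,y_1)+y_1B^{(\nu(\omega))}(1,y_1)$ is exactly the restriction to $\{x=1\}$ of the degree $\nu(\omega)+1$ homogeneous component of $xA+yB$; thus $i(\omega)$ is the order at $y_1=-\epsilon$ of that component, it depends only on the leading jet $\omega^{(\nu(\omega))}$, and it equals $+\infty$ precisely when this component vanishes.

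The decisive consequence of the good basis hypothesis is a nondegeneracy of the leading jets $\omega_i^{(\nu_i)}=A_i^{(\nu_i)}\dd x+B_i^{(\nu_i)}\dd y$. Because the coordinates were normalised so that $\nu(A_i)=\nu(B_i)=\nu_i$, the degree $\nu_1+\nu_2$ part of $A_1B_2-A_2B_1$ equals $A_1^{(\nu_1)}B_2^{(\nu_2)}-A_2^{(\nu_2)}B_1^{(\nu_1)}$. By Saito's criterion $A_1B_2-A_2B_1=uf$ with $u$ a unit, and the good basis condition $\nu_1+\nu_2=\nu$ forces this part to be the tangent cone $u(0)(y+\epsilon x)^{\nu}\ne 0$. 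Hence
$$\omega_1^{(\nu_1)}\wedge\omega_2^{(\nu_2)}=u(0)(y+\epsilon x)^{\nu}\,\dd x\wedge\dd y\ne 0,$$
so $\omega_1^{(\nu_1)}$ and $\omega_2^{(\nu_2)}$ are linearly independent over $\mathbb{C}(x,y)$. This is exactly the step that collapses without a good basis: if $\nu_1+\nu_2<\nu$ then the displayed leading part cancels and the two leading jets become dependent.

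With this in hand I would compare $\nu(a)+\nu_1$ and $\nu(b)+\nu_2$ and set $d=\min\{\nu(a)+\nu_1,\nu(b)+\nu_2\}$. The degree $d$ part of $\omega$ is $a^{(\nu(a))}\omega_1^{(\nu_1)}$, or $b^{(\nu(b))}\omega_2^{(\nu_2)}$, or their sum, according to which quantity is strictly smaller or whether they tie; and in the tie case the sum cannot vanish, for otherwise wedging it with $\omega_2^{(\nu_2)}$ and invoking the nonvanishing above would give $a^{(\nu(a))}=0$, a contradiction. Therefore $\nu(\omega)=d$ and $\omega^{(d)}$ is this explicit combination. Since $i(\omega)$ is read off $\omega^{(d)}$ through the additive operation $\omega\mapsto xA+yB$, the index polynomial of $\omega$ is the same combination of the index polynomials of $\omega_1$ and $\omega_2$, each weighted by $a^{(\nu(a))}(1,y_1)$ or $b^{(\nu(b))}(1,y_1)$. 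Because the order at $y_1=-\epsilon$ of a product is the sum of the orders and the order of a sum is at least the minimum, every surviving term has order at least $i(\omega_1)$ or $i(\omega_2)$, whence $i(\omega)\ge\min\{i(\omega_1),i(\omega_2)\}$; the dicritical cases are absorbed automatically, since then the relevant index polynomial is identically zero and contributes order $+\infty$.

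Combining this with the trivial reverse inequality gives $\mathfrak{i}(S)=\min\{i(\omega_1),i(\omega_2)\}$. I expect the genuine obstacle to be the tie case $\nu(a)+\nu_1=\nu(b)+\nu_2$, that is, ruling out the cancellation of the two leading jets; this is precisely the point where the good basis hypothesis is indispensable, through the nonvanishing of $\omega_1^{(\nu_1)}\wedge\omega_2^{(\nu_2)}$ that it guarantees.
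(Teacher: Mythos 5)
Your proof is correct and follows essentially the same route as the paper's: write $\omega = P_1\omega_1 + P_2\omega_2$, use $\nu_1+\nu_2=\nu$ to get $\omega_1^{(\nu_1)}\wedge\omega_2^{(\nu_2)}\neq 0$ so that the leading jets cannot cancel, and then read the index of $\omega$ off the resulting combination of the index polynomials of $\omega_1$ and $\omega_2$. Your explicit handling of the tie case $\nu(a)+\nu_1=\nu(b)+\nu_2$ and of dicritical $\omega$ merely spells out steps the paper leaves implicit.
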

\begin{proof}
By Saito criterion, one has $\omega_{1}\wedge\omega_{2}=uf$ with
$u(0,0)\neq 0$. Since $\nu_{1}+\nu_{2}=\nu,$ one has $
\omega_{1}^{\left(\nu_{1}\right)}\wedge\omega_{2}^{\left(\nu_{2}\right)}\neq0,
$
where $\omega_{i}^{\left(\nu_{i}\right)}=A_{i}^{\nu_{i}}\dd x+B_{i}^{\nu_{i}}\dd y$. In particular, both forms $\omega_1$ and $\omega_2$ cannot
be dicritical and therefore $\min\left\{ i\left(\omega_{1}\right),i\left(\omega_{2}\right)\right\} <\infty.$\\
Now, consider any form $\omega=P_{1}\omega_{1}+P_{2}\omega_{2}\in\Omega^{1}\left(S\right)$ with $P_i\in\mathbb{C}\{x,y\}$ and $m_{i}=\nu\left(P_{i}\right)$. Since $P_{1}^{\left(m_{1}\right)}\omega_{1}^{\left(\nu_{1}\right)}+P_{2}^{\left(m_{2}\right)}\omega_{2}^{\left(\nu_{1}\right)}$
cannot identically vanish, it is the homogeneous part of smallest
degree of $\omega.$ Therefore
\begin{align*}
i\left(\omega\right) & =\nu_{y_{1}=-\epsilon}\left(P_{1}^{\left(m_{1}\right)}\left(1,y_{1}\right)\left(A_{1}^{\nu_{1}}\left(1,y_{1}\right)+y_{1}B_{1}^{\nu_{1}}\left(1,y_{1}\right)\right)\right.\\
 & \qquad\qquad\left.+P_{2}^{\left(m_{2}\right)}\left(1,y_{1}\right)\left(A_{2}^{\nu_{2}}\left(1,y_{1}\right)+y_{1}B_{2}^{\nu_{2}}\left(1,y_{1}\right)\right)\right)\\
 & \geq\min\left\{ i\left(\omega_{1}\right),i\left(\omega_{2}\right)\right\} .
\end{align*}
\end{proof}
In the previous section, we remark that for an element $\omega_i$ in
a Saito basis we get $\nu(g_i)\geq \nu_i-1$ and
$i(\omega_i)\leq\nu(\omega_i)+1$. For good basis it is possible to
obtain the following result.
\begin{lem}\label{i-v1}
Given a good basis $\{\omega_1,\omega_2\}$ for $S$, if
$\nu\left(g_{i}\right)\geq\nu_{i}$ then
$i\left(\omega_{i}\right)=\nu_{i}+1.$
\end{lem}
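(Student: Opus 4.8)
The plan is to read off $i(\omega_i)$ from the initial (lowest degree) parts of $\omega_i$, which I will determine by combining two independent pieces of information: the cofactor relation in (\ref{eq:01}) together with the hypothesis $\nu(g_i)\ge\nu_i$, and the Saito criterion applied to the good basis. Recall that, in the normalized coordinates where $\nu(A_i)=\nu(B_i)=\nu_i$ and $f^{(\nu)}=(y+\epsilon x)^\nu$, one has $i(\omega_i)=\nu_{y_1=-\epsilon}\bigl(A_i^{(\nu_i)}(1,y_1)+y_1B_i^{(\nu_i)}(1,y_1)\bigr)$, so it suffices to pin down the homogeneous parts $A_i^{(\nu_i)}$ and $B_i^{(\nu_i)}$ of degree $\nu_i$.

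First I would compare the homogeneous parts of lowest degree on the two sides of $A_if_y-B_if_x=g_if$. Since $f^{(\nu)}=(y+\epsilon x)^\nu$, the degree $(\nu-1)$ parts of the partial derivatives are $f_x^{(\nu-1)}=\nu\epsilon(y+\epsilon x)^{\nu-1}$ and $f_y^{(\nu-1)}=\nu(y+\epsilon x)^{\nu-1}$, so the left hand side has order $\ge\nu_i+\nu-1$, with degree $(\nu_i+\nu-1)$ part equal to $A_i^{(\nu_i)}f_y^{(\nu-1)}-B_i^{(\nu_i)}f_x^{(\nu-1)}$. The right hand side has order $\ge\nu(g_i)+\nu\ge\nu_i+\nu$, so its contribution in degree $\nu_i+\nu-1$ vanishes. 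Equating the two gives \[\nu(y+\epsilon x)^{\nu-1}\bigl(A_i^{(\nu_i)}-\epsilon B_i^{(\nu_i)}\bigr)=0,\] hence $A_i^{(\nu_i)}=\epsilon B_i^{(\nu_i)}$, i.e. $\omega_i^{(\nu_i)}=B_i^{(\nu_i)}\,\dd(y+\epsilon x)$.

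Next I would invoke the good basis. As in the proof of the preceding proposition, $\omega_1^{(\nu_1)}\wedge\omega_2^{(\nu_2)}\ne0$, so this wedge is exactly the initial part of $\omega_1\wedge\omega_2=uf\,\dd x\wedge\dd y$; comparing the homogeneous parts of degree $\nu=\nu_1+\nu_2$ yields \[A_1^{(\nu_1)}B_2^{(\nu_2)}-A_2^{(\nu_2)}B_1^{(\nu_1)}=u(0,0)\,(y+\epsilon x)^\nu,\qquad u(0,0)\ne0.\] Substituting $A_i^{(\nu_i)}=\epsilon B_i^{(\nu_i)}$ factors the left hand side as $B_i^{(\nu_i)}$ times a homogeneous polynomial of degree $\nu-\nu_i$, so $B_i^{(\nu_i)}$ divides $(y+\epsilon x)^\nu$. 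As $y+\epsilon x$ is irreducible, unique factorization of homogeneous polynomials forces $B_i^{(\nu_i)}=c\,(y+\epsilon x)^{\nu_i}$ for some $c\ne0$.

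Finally I would substitute into the index: \[A_i^{(\nu_i)}(1,y_1)+y_1B_i^{(\nu_i)}(1,y_1)=(\epsilon+y_1)B_i^{(\nu_i)}(1,y_1)=c\,(y_1+\epsilon)^{\nu_i+1},\] whose multiplicity at $y_1=-\epsilon$ is $\nu_i+1$, giving $i(\omega_i)=\nu_i+1$. The step I would watch most carefully is the first one: one must use the precise hypothesis $\nu(g_i)\ge\nu_i$ — and not merely the generic bound $\nu(g_i)\ge\nu_i-1$ — since it is exactly this that kills the right hand side in degree $\nu_i+\nu-1$ and produces the clean identity $A_i^{(\nu_i)}=\epsilon B_i^{(\nu_i)}$; with only $\nu(g_i)\ge\nu_i-1$ the leading part $B_i^{(\nu_i)}$ would acquire an extra undetermined factor and the conclusion would fail. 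The other delicate point is conceptual rather than computational: neither the cofactor relation nor the Saito identity alone determines $B_i^{(\nu_i)}$, and it is their combination that forces it to be a pure power of $y+\epsilon x$.
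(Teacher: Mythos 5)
Your proposal is correct and follows essentially the same route as the paper's proof: the $(\nu_i+\nu-1)$-jet of the cofactor relation gives $A_i^{(\nu_i)}=\epsilon B_i^{(\nu_i)}$, the $\nu$-jet of the Saito identity then forces $B_i^{(\nu_i)}=c\left(y+\epsilon x\right)^{\nu_i}$, and the index is read off from the initial part. The only cosmetic difference is that you evaluate $i(\omega_i)$ directly from the definition, whereas the paper phrases the last step by writing $\omega_i=\frac{c}{\nu_i+1}\dd\left(\left(y+\epsilon x\right)^{\nu_i+1}\right)+\textup{h.o.t.}$ --- these are the same computation.
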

\begin{proof}
By symmetry let us consider $i=1$ and suppose that
$\nu\left(g_{1}\right)\geq\nu_{1}$. The $(\nu_{1}-1+\nu)$-jet of
\[
A_{1}\frac{\partial f}{\partial y}-B_{1}\frac{\partial f}{\partial
x}=g_{1}f \ \ \ \ \mbox{is}\ \ \ \
A_{1}^{\left(\nu_{1}\right)}\nu\left(y+\epsilon
x\right)^{\nu-1}-B_{1}^{\left(\nu_{1}\right)}\nu\epsilon\left(y+\epsilon
x\right)^{\nu-1}=0,
\]
thus $A_{1}^{\left(\nu_{1}\right)}=\epsilon
B_{1}^{\left(\nu_{1}\right)}$. On the other hand the $\nu$-jet of
$A_{1}B_{2}-A_{2}B_{1}=uf
$
where $u(0,0)\neq 0$ reduces to
$$
A_{1}^{\left(\nu_{1}\right)}B_{2}^{\left(\nu_{2}\right)}-
A_{2}^{\left(\nu_{2}\right)}B_{1}^{\left(\nu_{1}\right)}=
B_{1}^{\left(\nu_{1}\right)}\left(\epsilon
B_{2}^{\left(\nu_{2}\right)}-A_{2}^{\left(\nu_{2}\right)}\right)
=u(0,0)\left(y+\epsilon x\right)^{\nu}.
$$
Thus, there exists some constant $c\neq 0$ such that $
B_{1}^{\left(\nu_{1}\right)}=c\left(y+\epsilon x\right)^{\nu_{1}}$.
Therefore, $\omega_{1}$ can be written
\[
\omega_{1}=\frac{c}{\nu_{1}+1}\dd\left(\left(y+\epsilon
x\right)^{\nu_{1}+1}\right)+ \textup{h.o.t.}
\]
thus $i\left(\omega_{1}\right)=\nu_{1}+1.$
\end{proof}
Notice that the above proof ensures that the inequality
$\nu\left(g_{i}\right)\geq\nu_{i}$ cannot hold for both elements in
a good basis. Moreover, given a good basis for $\Omega^1(S)$ we can
always get a good basis with some nice properties. To do this we
present the following lemmas.
\begin{lem}\label{v-1}
If $\Omega^1(S)$ admits a good basis $\left\{
\omega_{1},\omega_{2}\right\}$, then we can suppose that
$$i\left(\omega_{1}\right)=\mathfrak{i}\left(S\right)\ \ \ \
\mbox{and}\ \ \ \nu\left(g_{1}\right)=\nu_{1}-1.$$
\end{lem}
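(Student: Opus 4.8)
The plan is to start from the two structural facts already available and to show that they can be concentrated on a single element of the basis. By the previous Proposition, after possibly interchanging $\omega_1$ and $\omega_2$ we may assume $i\left(\omega_1\right)=\mathfrak{i}\left(S\right)$, which already gives the first requirement; the whole point is to arrange $\nu\left(g_1\right)=\nu_1-1$ at the same time. If this holds there is nothing to do, so I assume $\nu\left(g_1\right)\geq\nu_1$. Then Lemma \ref{i-v1} applies to $\omega_1$: it forces $i\left(\omega_1\right)=\nu_1+1$, hence $\mathfrak{i}\left(S\right)=\nu_1+1$, and its proof moreover pins down the leading jet as $\omega_1^{\left(\nu_1\right)}=c\left(y+\epsilon x\right)^{\nu_1}\left(\epsilon\,\dd x+\dd y\right)$ for some $c\neq0$. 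By the remark following Lemma \ref{i-v1}, the inequality $\nu\left(g_i\right)\geq\nu_i$ cannot hold for both elements, so necessarily $\nu\left(g_2\right)=\nu_2-1$. If it happens that $i\left(\omega_2\right)=\mathfrak{i}\left(S\right)$ as well, then $\omega_2$ already carries both the minimal index and the minimal cofactor valuation and exchanging the two forms finishes the proof; so the essential case is $i\left(\omega_2\right)>\mathfrak{i}\left(S\right)=\nu_1+1$, where the minimal index sits on $\omega_1$ while the minimal cofactor valuation sits on $\omega_2$.

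In this essential case I transfer one property onto the element carrying the other by a monomial modification that leaves the wedge $\omega_1\wedge\omega_2$ unchanged, and hence preserves the Saito criterion. Set $\Delta=\nu_1-\nu_2$. If $\Delta\geq0$ I replace $\omega_1$ by $\omega_1'=\omega_1+x^{\Delta}\omega_2$, so that $x^{\Delta}\omega_2$ has the same multiplicity $\nu_1$ as $\omega_1$; if $\Delta<0$ I replace $\omega_2$ by $\omega_2'=\omega_2+x^{-\Delta}\omega_1$ and then swap. In either case one modifies the form of larger multiplicity by the form of smaller multiplicity raised to the same order.

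The verification rests on two additivity observations. First, the cofactor is additive, the new one being $g_1+x^{\Delta}g_2$ (resp. $g_2+x^{-\Delta}g_1$); since $\nu\left(g_1\right)\geq\nu_1$ while $\nu\left(x^{\Delta}g_2\right)=\nu_1-1$ (resp. $\nu\left(g_2\right)=\nu_2-1$ while $\nu\left(x^{-\Delta}g_1\right)\geq\nu_2$), the smaller valuation survives and the modified form attains the minimal cofactor valuation. Second, writing $\Phi_j\left(y_1\right)=A_j^{\left(\nu_j\right)}\left(1,y_1\right)+y_1B_j^{\left(\nu_j\right)}\left(1,y_1\right)$ for the quantity computing the index, the new leading contribution is $\Phi_1+\Phi_2$; because $\nu_{y_1=-\epsilon}\left(\Phi_1\right)=i\left(\omega_1\right)=\nu_1+1$ is strictly smaller than $\nu_{y_1=-\epsilon}\left(\Phi_2\right)=i\left(\omega_2\right)$, the valuation of the sum is exactly $\nu_1+1=\mathfrak{i}\left(S\right)$. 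This last point does double duty: it shows the modified form realizes $\mathfrak{i}\left(S\right)$, and, being finite, it forces $\Phi_1+\Phi_2\not\equiv0$, so the modified leading jet is nonzero, the multiplicities stay $\left(\nu_1,\nu_2\right)$, and the good-basis property is preserved.

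I expect the main obstacle to be precisely this simultaneous control under one and the same monomial modification: one must check that it lowers exactly the intended valuation while leaving the other untouched, and that it does not destroy the good-basis structure. The delicate hinge is ruling out cancellation in the modified leading jet, for which the strict inequality $i\left(\omega_1\right)<i\left(\omega_2\right)$ available in the essential case (equivalently $\omega_1^{\left(\nu_1\right)}\wedge\omega_2^{\left(\nu_2\right)}\neq0$) is exactly what guarantees that the multiplicities cannot jump.
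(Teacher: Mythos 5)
Your proposal is correct and takes essentially the same approach as the paper: after placing the minimal index on $\omega_{1}$, both arguments hinge on the same monomial modifications $\omega+x^{\left|\nu_{1}-\nu_{2}\right|}\omega'$, the additivity of the cofactor and of the index numerator $A^{\left(\nu\right)}\left(1,y_{1}\right)+y_{1}B^{\left(\nu\right)}\left(1,y_{1}\right)$, Lemma \ref{i-v1}, and the remark that $\nu\left(g_{i}\right)\geq\nu_{i}$ cannot hold for both elements of a good basis. Your reorganization (first assuming $\nu\left(g_{1}\right)\geq\nu_{1}$ so that Lemma \ref{i-v1} pins $\mathfrak{i}\left(S\right)=\nu_{1}+1$, and absorbing the paper's dicritical Subcase 2.b uniformly via $i\left(\omega_{2}\right)=\infty$) is a mild streamlining rather than a different method; note only that your closing parenthetical asserting $i\left(\omega_{1}\right)<i\left(\omega_{2}\right)$ is \emph{equivalent} to $\omega_{1}^{\left(\nu_{1}\right)}\wedge\omega_{2}^{\left(\nu_{2}\right)}\neq0$ is inaccurate (the latter holds for every good basis), but this is harmless since your non-cancellation argument correctly rests on the finite valuation of $\Phi_{1}+\Phi_{2}$ at $y_{1}=-\epsilon$.
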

\begin{proof}
By symmetry we can suppose that
$i\left(\omega_{1}\right)=\mathfrak{i}\left(S\right).$\\
{\bf Case 1.} If
$i\left(\omega_{2}\right)=i\left(\omega_{1}\right)$, then, as
mentioned above, for $i=1$ or $2,$ one has
$\nu\left(g_{i}\right)=\nu_{i}-1$. Switching maybe the two forms, we
can suppose that $\omega_{1}$ satisfies the conclusion of the lemma.\\
{\bf Case 2.} Suppose now that
$i\left(\omega_{1}\right)<i\left(\omega_{2}\right).$\\
{\bf Subcase 2.a} if $\nu_{1}\leq\nu_{2}$, we consider, the family
$\left\{ \omega_{1},\overline{\omega_{2}}\right \}$, where
$\overline{\omega_{2}}=\omega_{2}+c x^{\nu_{2}-\nu_{1}}\omega_{1}$
and $c\in\mathbb{C}$. For a generic value of $c$, we still have a
good basis for $S$. Moreover, the $\nu_{2}$-jet of
$\overline{\omega_{2}}$ is
\[
\left(A_{2}^{\left(\nu_{2}\right)}+c
x^{\nu_{2}-\nu_{1}}A_{1}^{\left(\nu_{1}\right)}\right)\dd
x+\left(B_{2}^{\left(\nu_{2}\right)}+c
x^{\nu_{2}-\nu_{1}}B_{1}^{\left(\nu_{1}\right)}\right)\dd y.
\]
Thus, to evaluate its index, one writes
{\small
\begin{align*}
i\left(\overline{\omega_{2}}\right) & =\nu_{y=-\epsilon}\left(A_{2}^{\left(\nu_{2}\right)}\left(1,y\right)+c A_{1}^{\left(\nu_{1}\right)}\left(1,y\right)+y\left(B_{2}^{\left(\nu_{2}\right)}\left(1,y\right)+c B_{1}^{\left(\nu_{1}\right)}\left(1,y\right)\right)\right) & \\
 & =\nu_{y=-\epsilon}\left(A_{2}^{\left(\nu_{2}\right)}\left(1,y\right)+yB_{2}^{\left(\nu_{2}\right)}\left(1,y\right)+c\left(A_{1}^{\left(\nu_{1}\right)}\left(1,y\right)+yB_{1}^{\left(\nu_{1}\right)}\left(1,y\right)\right)\right) & =i\left(\omega_{1}\right).
\end{align*}
} Thus we are led to the previous case $(1)$.\\
{\bf Subcase 2.b.} Finally, if $\nu_{1}>\nu_{2}$, suppose that
$\nu\left(g_{1}\right)\geq\nu_{1}$, then by Lemma \ref{i-v1} we have
$i\left(\omega_{1}\right)=\nu_{1}+1$. Consequently
$i\left(\omega_{1}\right)>\nu_{2}+1$ and then
$i\left(\omega_{2}\right)>\nu_{2}+1$.  If $\omega_2$ is not
dicritical, the inequality above leads to a contradiction, thus
$\omega_{2}$ is dicritical. Therefore, it can be seen that
$\nu\left(g_2\right)=\nu_2-1$. Let us consider now
$\overline{\omega}_1=\omega_1+ x^{\nu_1-\nu_2}\omega_2$. Then, the
family $\left\{\overline{\omega}_1,\omega_2\right\}$ is still a good
basis and one has
\begin{eqnarray*}
\overline{\omega}_1\wedge\dd f&=&\overline{g}_{1}f\dd x\wedge\dd y\qquad \textup{with }\ \nu(\overline{g}_{1})=\nu_1-1 \\
i\left(\overline{\omega}_1\right)&=&i\left(\omega_1\right)=\mathfrak{i}\left(S\right).
\end{eqnarray*}
\end{proof}
In addition, from a basis for $\Omega^1(S)$ we can get a
basis satisfying the following lemma.
\begin{lem}
\label{lem:We-can-suppose}
Given a basis
$\{\omega_1,\omega_2\}$ for $\Omega^1(S)$ with $i(\omega_1)\leq i(\omega_2)$ we can suppose that
\[ \gcd\left (B_{i},\frac{\partial f}{\partial y}\right )=1,\ \ \ \mbox{for}\ i=1,2.
\]
\end{lem}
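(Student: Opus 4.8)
We have a Saito basis $\{\omega_1,\omega_2\}$ for $\Omega^1(S)$ with $\omega_i = A_i\,\dd x + B_i\,\dd y$, satisfying the relations $A_iB_2 - A_2B_1 = uf$ (Saito criterion) and $A_i\frac{\partial f}{\partial y} - B_i\frac{\partial f}{\partial x} = g_i f$. We want to modify the basis so that $\gcd(B_i, \frac{\partial f}{\partial y}) = 1$ for both $i$. The point is that $\frac{\partial f}{\partial y}$ is (up to a unit) the pullback of a polynomial with finitely many factors, and for a plane branch $f$ is irreducible; we want to arrange that $B_1$ and $B_2$ share no common factor with $\frac{\partial f}{\partial y}$.

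Let me think about what tools are available. The modifications that preserve the Saito basis property are unimodular column operations: replacing $(\omega_1,\omega_2)$ by $(\omega_1, \omega_2 + P\omega_1)$ or $(\omega_1 + Q\omega_2, \omega_2)$ with $P, Q \in \mathbb{C}\{x,y\}$. Under such a move the new $B$-component becomes $B_2 + PB_1$, etc. I need to understand which factors of $\frac{\partial f}{\partial y}$ could divide $B_i$, and whether a generic choice of multiplier kills them.

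===

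The plan is to proceed by successively removing common factors of $B_i$ with $\partial f/\partial y$ using elementary basis transformations that preserve the Saito property, namely replacing one generator by itself plus a multiple of the other. Concretely, the admissible moves are
\[
(\omega_1,\omega_2)\longmapsto(\omega_1+Q\,\omega_2,\ \omega_2)
\qquad\text{or}\qquad
(\omega_1,\omega_2)\longmapsto(\omega_1,\ \omega_2+P\,\omega_1),
\]
with $P,Q\in\mathbb C\{x,y\}$; by the Saito criterion each such pair is again a basis since the determinant $A_1B_2-A_2B_1=uf$ is unchanged. Under the second move the $\dd y$-component of $\omega_2$ becomes $B_2+PB_1$, and similarly for the first.

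First I would record why a common factor is an obstruction one can hope to eliminate: if an irreducible $h\in\mathbb C\{x,y\}$ divides both $B_i$ and $\partial f/\partial y$, then $h$ cannot divide $B_{3-i}$ simultaneously. Indeed, from the cofactor relation $A_i\,\partial f/\partial y-B_i\,\partial f/\partial x=g_if$ one gets $h\mid A_i\,\partial f/\partial y$ when $h\mid B_i$ and $h\mid\partial f/\partial y$; and since $\omega_i$ has an isolated singularity $\gcd(A_i,B_i)=1$ (noted in the excerpt), one controls which factors can be shared. The key structural fact is that $\gcd(B_1,B_2)=1$: any common divisor of $B_1$ and $B_2$ would, via $A_1B_2-A_2B_1=uf$, have to divide $uf$, hence $f$, but then it would divide both $A_1B_2$ and $A_2B_1$, and combined with $\gcd(A_i,B_i)=1$ this forces $h\mid B_1$ and $h\mid B_2$ to propagate a contradiction with $f$ being the reduced branch equation. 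So $B_1$ and $B_2$ have no common factor.

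With $\gcd(B_1,B_2)=1$ in hand, the removal step is a genericity argument. Suppose $h\mid\gcd(B_2,\partial f/\partial y)$; replace $\omega_2$ by $\overline{\omega_2}=\omega_2+P\omega_1$ so that $\overline B_2=B_2+PB_1$. Since $h\nmid B_1$, for a generic choice of $P$ (for instance a generic constant, or $P$ avoiding the finitely many bad congruence conditions modulo each irreducible factor of $\partial f/\partial y$) we have $h\nmid\overline B_2$. Because $\partial f/\partial y$ has only finitely many irreducible factors, a single generic choice of $P$ can be made to clear all common factors of $\overline B_2$ with $\partial f/\partial y$ at once; one then does the symmetric move on $\omega_1$, and must check the two clearings do not reintroduce factors in the other component, which again holds for generic multipliers since the bad loci are proper Zariski-closed conditions.

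The main obstacle, and the step requiring the most care, will be guaranteeing that the genericity argument is simultaneously valid for both generators and compatible with the hypothesis $i(\omega_1)\le i(\omega_2)$ so that the ordering of indices is not disturbed. The transformation $\omega_2\mapsto\omega_2+P\omega_1$ can only lower $i(\omega_2)$ toward $i(\omega_1)$ or leave it unchanged (as in the index computations of Lemma \ref{v-1}), so the inequality $i(\omega_1)\le i(\omega_2)$ is preserved if we modify only $\omega_2$ by a multiple of $\omega_1$, or we restrict the allowed multipliers to those of high enough order that the lowest-degree jet, hence the index, is untouched. The delicate bookkeeping is to arrange a \emph{single} pair of multipliers that clears both gcd conditions, keeps the pair a good or ordinary basis, and respects the index ordering; I expect this to reduce to checking that the finitely many divisibility constraints define a proper closed subset of the parameter space of admissible multipliers, so that a generic choice works.
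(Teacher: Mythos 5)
Your proposal is correct and follows essentially the same route as the paper's proof: elementary modifications $\omega_i \mapsto \omega_i + P\,\omega_j$ with generic multipliers of sufficiently high order (so that the multiplicities $\nu_i$ and the ordering $i(\omega_1)\le i(\omega_2)$ are untouched), with genericity clearing the finitely many irreducible factors of $\frac{\partial f}{\partial y}$ at once. The one blemish is your ``key structural fact'' $\gcd(B_1,B_2)=1$: the justification you offer (that $h\mid f$ would ``propagate a contradiction with $f$ being the reduced branch equation'') is not actually an argument, but it is also more than you need --- the paper uses only $\gcd\left(B_1,B_2,\frac{\partial f}{\partial y}\right)=1$, which follows in one line from $A_1B_2-A_2B_1=uf$ together with $\gcd\left(f,\frac{\partial f}{\partial y}\right)=1$ for reduced $f$, and this weaker statement is exactly what your genericity step requires, since only factors of $\frac{\partial f}{\partial y}$ ever matter.
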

\begin{proof}
Suppose that $H=\gcd\left (B_{1},B_{2},\frac{\partial f}{\partial
y}\right )$. Since by (\ref{eq:01})
$
A_{1}B_{2}-A_{2}B_{1}=uf,
$
$H$ would divide $f$. As $\frac{\partial f}{\partial y}$ and $f$ are
relatively prime, we get
\begin{equation}
\gcd\left(B_{1},B_{2},\frac{\partial f}{\partial
y}\right)=1.\label{eq:gcd}
\end{equation}
Now consider the family $\left\{
\overline{\omega_{1}}=\omega_{1}+P_{1}\omega_{2},\overline{\omega_{2}}=\omega_{2}+P_{2}\omega_{1}\right\}
$ where $P_{i}\in\mathbb{C}\left\{ x,y\right\} $ with
$\nu\left(P_{i}\right)\gg1$. Note that for $P_{i}$ of algebraic
multiplicity big enough, the forms
\begin{align*}
\overline{\omega_{1}} & =\left(A_{1}+P_{1}A_{2}\right)\dd x+\left(B_{1}+P_{1}B_{2}\right)\dd y=\overline{A_{1}}\dd x+\overline{B_{1}}\dd y\\
\overline{\omega_{2}} & =\left(A_{2}+P_{2}A_{1}\right)\dd
x+\left(B_{2}+P_{2}B_{1}\right)\dd y=\overline{A_{2}}\dd
x+\overline{B_{2}}\dd y
\end{align*}
satisfy $
\nu\left(\overline{\omega_{i}}\right)=\nu\left(\overline{A_{i}}\right)=\nu\left(\overline{B_{i}}\right)=\nu_{i},
$ and $i(\omega_1)=i(\overline{\omega_{1}})\leq
i(\overline{\omega_{2}})$.\\
Moreover, $\{\overline{\omega_1},\overline{\omega_2}\}$ is a basis
for $\Omega^1(S)$.
Now the relation (\ref{eq:gcd}) ensures that for a generic choice of
the $P_{i}'s, i=1,2$ - in the sense of Krull -, one has
\[
\gcd\left(\overline{B_{i}},\frac{\partial f}{\partial y}\right)=1.
\]
\end{proof}
As a consequence we obtain the following.
\begin{cor}\label{cor1} For any basis $\{\omega_1,\omega_2\}$ for $\Omega^1(S)$ satisfying the previous Lemma
we have
\[ \gcd(B_i,g_i)=\gcd\left ( \frac{\partial f}{\partial
y},g_i\right )=1.
\]
\end{cor}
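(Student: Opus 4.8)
The plan is to work inside the unique factorization domain $\mathbb{C}\{x,y\}$ and to rule out, for each required pair, the existence of a common irreducible factor, feeding on the two identities in (\ref{eq:01}) together with the hypotheses already available. The ingredients I would invoke are: the cofactor relation $A_i\frac{\partial f}{\partial y}-B_i\frac{\partial f}{\partial x}=g_if$; the fact that each $\omega_i$ has an isolated singularity, i.e. $\gcd(A_i,B_i)=1$; the conclusion of Lemma \ref{lem:We-can-suppose}, namely $\gcd\left(B_i,\frac{\partial f}{\partial y}\right)=1$ for $i=1,2$; and the fact that $S$ is a branch, hence has an isolated singularity, so that $\gcd\left(\frac{\partial f}{\partial x},\frac{\partial f}{\partial y}\right)=1$ (otherwise a common factor would cut out a curve through the origin inside the zero locus of the Jacobian ideal, forcing $\mu=\infty$).

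First I would prove $\gcd(B_i,g_i)=1$. Suppose an irreducible $p$ divides both $B_i$ and $g_i$. Rewriting the cofactor relation as $A_i\frac{\partial f}{\partial y}=g_if+B_i\frac{\partial f}{\partial x}$, the right-hand side is divisible by $p$, hence $p\mid A_i\frac{\partial f}{\partial y}$. Since $\gcd(A_i,B_i)=1$ and $p\mid B_i$, we have $p\nmid A_i$, so $p\mid \frac{\partial f}{\partial y}$; but then $p$ divides $\gcd\left(B_i,\frac{\partial f}{\partial y}\right)=1$, a contradiction.

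Next I would prove $\gcd\left(\frac{\partial f}{\partial y},g_i\right)=1$ along the same lines. Suppose an irreducible $p$ divides both $\frac{\partial f}{\partial y}$ and $g_i$. From the cofactor relation written as $B_i\frac{\partial f}{\partial x}=A_i\frac{\partial f}{\partial y}-g_if$, both terms on the right are divisible by $p$, so $p\mid B_i\frac{\partial f}{\partial x}$. If $p\mid B_i$ then $p$ divides $\gcd\left(B_i,\frac{\partial f}{\partial y}\right)=1$; if instead $p\mid \frac{\partial f}{\partial x}$ then $p$ divides $\gcd\left(\frac{\partial f}{\partial x},\frac{\partial f}{\partial y}\right)=1$. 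Either way we reach a contradiction, which closes the argument.

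The computation itself is routine; the only genuine input beyond Lemma \ref{lem:We-can-suppose} is the coprimality of the two partial derivatives, which is exactly where the hypothesis that $S$ is a singular irreducible branch — equivalently, that it has an isolated singularity and finite Milnor number — enters. That is the step I would single out as the crux, since without it the second subcase of the final paragraph could not be excluded.
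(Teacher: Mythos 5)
Your proposal is correct and follows essentially the same route as the paper: both arguments run the cofactor relation $A_i\frac{\partial f}{\partial y}-B_i\frac{\partial f}{\partial x}=g_if$ through UFD reasoning, using $\gcd\left(B_i,\frac{\partial f}{\partial y}\right)=1$ from Lemma \ref{lem:We-can-suppose}, the isolated singularity of $\omega_i$ (i.e. $\gcd(A_i,B_i)=1$), and the coprimality of the partial derivatives, differing only in the order these facts are invoked and in your (welcome) explicit justification of $\gcd\left(\frac{\partial f}{\partial x},\frac{\partial f}{\partial y}\right)=1$, which the paper uses without comment.
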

\begin{proof} As $A_i\frac{\partial f}{\partial y}-B_i\frac{\partial f}{\partial x}=g_if$,
if $1\neq H=\gcd(B_i,g_i)$ then $H$ must divide $A_i\frac{\partial
f}{\partial y}$. By the previous lemma, $\gcd(B_i,\frac{\partial
f}{\partial y})=1$ so $H$ divides $A_i$, a contradiction because
$\omega_i$ has an isolated singularity.\\
Suppose $H'=\gcd\left ( \frac{\partial f}{\partial y},g_i\right )$,
so $H'$ divides $B_i\frac{\partial f}{\partial x}$. As $\gcd\left (
\frac{\partial f}{\partial y},\frac{\partial f}{\partial x}\right
)=\gcd(B_i,g_i)=1$, we must have $H'=1$.
\end{proof}
In particular, the above lemma allow us to consider a good Saito basis $\{\omega_1,\omega_2\}$ with $\mathfrak{i}\left(S\right)=i(\omega_1)$ and $\gcd\left(B_{i},\frac{\partial f}{\partial y}\right)=\gcd(B_i,g_i)=\gcd\left ( \frac{\partial f}{\partial
y},g_i\right )=1$.
\begin{lem}
\label{lem:The-intersection-of} If $S:\{f=0\}$ admits a good basis satisfying the previous conditions, then the intersection of the tangent cone
of
\begin{enumerate}
\item $g_{1}$ and $g_{2}$,
\item $B_{i}$ and $g_{i}$, for $i=1,2$,
\item $B_{i}$ and $\frac{\partial f}{\partial y}$, for $i=1,2$
\end{enumerate}
is empty or equal to $y+\epsilon x=0$.
\end{lem}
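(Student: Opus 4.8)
The plan is to read each relevant tangent cone off the jets of the two structural identities in (\ref{eq:01}) and to use the Saito criterion to control their common zeros. Throughout I regard the tangent cone of a series $H$ as the homogeneous form $H^{(\nu(H))}$ and its zeros as points of $\mathbb{P}^1$, so that the line $\{y+\epsilon x=0\}$ is the single point where $f^{(\nu)}=(y+\epsilon x)^{\nu}$ vanishes. Statement (3) is then immediate: since $f^{(\nu)}=(y+\epsilon x)^{\nu}$, the tangent cone of $\frac{\partial f}{\partial y}$ is $\nu(y+\epsilon x)^{\nu-1}$, a pure power of $y+\epsilon x$, so every common zero of the tangent cones of $B_i$ and $\frac{\partial f}{\partial y}$ already lies on $\{y+\epsilon x=0\}$.

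The heart of the argument is two jet computations. Taking the $\nu$-jet of $A_1B_2-A_2B_1=uf$ gives $A_1^{(\nu_1)}B_2^{(\nu_2)}-A_2^{(\nu_2)}B_1^{(\nu_1)}=u(0,0)(y+\epsilon x)^{\nu}$; evaluating this identity at a point off the line shows at once that $A_i^{(\nu_i)}$ and $B_i^{(\nu_i)}$ have no common zero off $\{y+\epsilon x=0\}$, for otherwise the left-hand side would vanish while the right-hand side does not. Next, taking the $(\nu_i+\nu-1)$-jet of $A_i\frac{\partial f}{\partial y}-B_i\frac{\partial f}{\partial x}=g_if$ yields $\nu(y+\epsilon x)^{\nu-1}\bigl(A_i^{(\nu_i)}-\epsilon B_i^{(\nu_i)}\bigr)=g_i^{(\nu_i-1)}(y+\epsilon x)^{\nu}$ whenever $\nu(g_i)=\nu_i-1$, whence $g_i^{(\nu_i-1)}=\tfrac{\nu}{y+\epsilon x}\bigl(A_i^{(\nu_i)}-\epsilon B_i^{(\nu_i)}\bigr)$. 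In particular a zero of the tangent cone of $g_i$ lying off the line is precisely a point where $A_i^{(\nu_i)}=\epsilon B_i^{(\nu_i)}$.

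These two facts settle (1) and (2) in the range $\nu(g_i)=\nu_i-1$. For (2), a common zero of the tangent cones of $B_i$ and $g_i$ off the line would satisfy both $B_i^{(\nu_i)}=0$ and $A_i^{(\nu_i)}=\epsilon B_i^{(\nu_i)}=0$, i.e.\ a common zero of $A_i^{(\nu_i)}$ and $B_i^{(\nu_i)}$ off the line, which the first jet computation excludes. For (1), a common zero of the tangent cones of $g_1$ and $g_2$ off the line would force $A_i^{(\nu_i)}=\epsilon B_i^{(\nu_i)}$ for both $i$; substituting into the $\nu$-jet of the Saito relation makes its left-hand side vanish while $u(0,0)(y+\epsilon x)^{\nu}$ does not vanish there, again a contradiction.

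The one point requiring care is the degenerate case $\nu(g_i)\geq\nu_i$, where the formula for $g_i^{(\nu_i-1)}$ is unavailable. By Lemma \ref{v-1} I may assume $\nu(g_1)=\nu_1-1$, and by the remark following Lemma \ref{i-v1} the inequality $\nu(g_i)\geq\nu_i$ holds for at most one form, say $\omega_2$. In that situation the computation in the proof of Lemma \ref{i-v1} gives $A_2^{(\nu_2)}=\epsilon B_2^{(\nu_2)}$ with $B_2^{(\nu_2)}=c(y+\epsilon x)^{\nu_2}$: the pure-power shape of $B_2^{(\nu_2)}$ settles (2) for $i=2$ directly, and feeding $A_2^{(\nu_2)}=\epsilon B_2^{(\nu_2)}$ into the $\nu$-jet of the Saito relation shows that $g_1^{(\nu_1-1)}$ has no zero off the line at all, which settles (1). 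I expect this degenerate case, together with keeping track of which identity controls which tangent cone, to be the main obstacle; the remainder is routine bookkeeping with jets.
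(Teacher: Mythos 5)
Your proof is correct and takes essentially the same route as the paper: the same $\nu$-jet of the Saito relation and the same $(\nu_i+\nu-1)$-jet of the cofactor relation, with the same dichotomy on $\nu(g_2)$ — the paper merely packages the two cases as the factored homogeneous identities $g_{1}^{\left(\nu\left(g_{1}\right)\right)}B_{2}^{\left(\nu_{2}\right)}-g_{2}^{\left(\nu\left(g_{2}\right)\right)}B_{1}^{\left(\nu_{1}\right)}=c\nu\left(y+\epsilon x\right)^{\nu-1}$ or $g_{1}^{\left(\nu\left(g_{1}\right)\right)}B_{2}^{\left(\nu_{2}\right)}=c\nu\left(y+\epsilon x\right)^{\nu-1}$, where you argue pointwise off the line. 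One cosmetic remark: in the degenerate case your observation that $g_{1}^{\left(\nu_{1}-1\right)}$ vanishes only on $y+\epsilon x=0$ also settles (2) for $i=1$, which you attributed only to (1).
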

\begin{proof}
The $\nu$-jet of (\ref{eq:01}) is
\begin{equation}
A_{1}^{\left(\nu_{1}\right)}B_{2}^{\left(\nu_{2}\right)}-A_{2}^{\left(\nu_{2}\right)}B_{1}^{\left(\nu_{1}\right)}=c\left(y+\epsilon
x\right)^{\nu}.\label{eq:1}
\end{equation}
where $c\neq0$ and $\epsilon\in\mathbb{C}$. Now, for $i=1,2$, both
following relations $ A_{i}^{\left(\nu_{i}\right)}-\epsilon
B_{i}^{\left(\nu_{i}\right)}=0$ cannot be true all together since it
would yield a contradiction with the relation (\ref{eq:1}). Suppose
the relation above is not true for at least $i=1$, then the cofactor
relations ensures that
\[
A_{1}^{\left(\nu_{1}\right)}-\epsilon
B_{1}^{\left(\nu_{1}\right)}=\frac{1}{\nu}g_{1}^{\left(\nu\left(g_{1}\right)\right)}\left(y+\epsilon
x\right).
\]
Combining the above relations yields
$$
g_{1}^{\left(\nu\left(g_{1}\right)\right)}B_{2}^{\left(\nu_{2}\right)}-
g_{2}^{\left(\nu\left(g_{2}\right)\right)}B_{1}^{\left(\nu_{1}\right)}
=c\nu\left(y+\epsilon x\right)^{\nu-1},\ \ \ \mbox{or}\ \ \
g_{1}^{\left(\nu\left(g_{1}\right)\right)}B_{2}^{\left(\nu_{2}\right)}
 =c\nu\left(y+\epsilon x\right)^{\nu-1} $$ from which is derived
$\left(1\right)$ and $\left(2\right)$. The point $\left(3\right)$
follows from the fact that the tangent cone of $\frac{\partial
f}{\partial y}$ and $f$ are the same.
\end{proof}
In what follows we denote by $I_P(G,H)$ the intersection
multiplicity of $G,H\in\mathbb{C}\{x,y\}$ at the point
$P\in\mathbb{C}^2$. If $P=(0,0)$ then we write $I(G,H):=I_P(G,H)$,
that is, $I(G,H)=\dim_{\mathbb{C}}\frac{\mathbb{C}\{x,y\}}{(G,H)}$.\\
An important topological invariant for $S:\{f=0\}$ is the Milnor
number $\mu$ which can be computed by
\begin{equation}\label{milnor}
 \mu:=I\left(\frac{\partial f}{\partial
y},\frac{\partial f}{\partial x}\right)=\sum_{i=1}^{N}\nu_{(i)}(\nu_{(i)}-1)
\end{equation}
where $\nu_{(i)};\ i=1,\ldots ,N$ denote the sequence of multiplicities in
the canonical resolution of $S$. In addition, by Zariski (see (2.4)
in \cite{zariski}), we have
\begin{equation}\label{derivative}
 I\left(\frac{\partial
f}{\partial y},f\right)=\mu+\nu-1.
\end{equation}
Combining the Lemma \ref{v-1} and the above result we can obtain
an expression for $I(g_1,g_2)$.
\begin{lem} If $g_1$ and $g_1$ are the cofactors for a good basis
for $\Omega^1(S)$, then $I(g_1,g_2)$ is finite and
\[
I\left(g_{1},g_{2}\right)=I\left(\frac{\partial f}{\partial
y},B_{1}\right)-I\left(B_{1},g_{1}\right)-\nu+1.
\]
\end{lem}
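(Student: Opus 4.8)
My approach rests on a single elimination identity extracted from the two cofactor relations in (\ref{eq:01}). Working with a good basis normalised as in Lemma \ref{v-1}, Lemma \ref{lem:We-can-suppose} and Corollary \ref{cor1} (so that $\nu(g_1)=\nu_1-1$ and all the relevant pairwise gcd's are trivial), I would multiply $A_1\frac{\partial f}{\partial y}-B_1\frac{\partial f}{\partial x}=g_1 f$ by $B_2$, multiply $A_2\frac{\partial f}{\partial y}-B_2\frac{\partial f}{\partial x}=g_2 f$ by $B_1$, and subtract. The $\frac{\partial f}{\partial x}$-terms cancel, the coefficient of $\frac{\partial f}{\partial y}$ becomes $A_1B_2-A_2B_1=uf$, and dividing the result by $f$ leaves the clean relation
\[
g_1 B_2 - g_2 B_1 = u\,\frac{\partial f}{\partial y},\qquad u(0,0)\neq 0.
\]
This identity is the engine of the whole argument. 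Finiteness of $I(g_1,g_2)$ follows immediately: a common factor of $g_1$ and $g_2$ would, by the identity, divide $\frac{\partial f}{\partial y}$, contradicting $\gcd\!\left(\frac{\partial f}{\partial y},g_1\right)=1$ from Corollary \ref{cor1}; hence $g_1,g_2$ share no branch and $(g_1,g_2)$ has finite codimension.

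Next I would compute the auxiliary number $I\!\left(\frac{\partial f}{\partial y},g_1\right)$ in two independent ways. Reducing the key identity modulo $g_1$ gives $g_1B_2-g_2B_1\equiv -g_2B_1$, so that, since $u$ is a unit and intersection multiplicity is additive in one slot (legitimate because all the pairs involved are coprime),
\[
I\!\left(\tfrac{\partial f}{\partial y},g_1\right)=I(g_1,g_2B_1)=I(g_1,g_2)+I(B_1,g_1).
\]
Separately, reducing the cofactor relation $A_1\frac{\partial f}{\partial y}-B_1\frac{\partial f}{\partial x}=g_1 f$ modulo $\frac{\partial f}{\partial y}$ gives $g_1 f\equiv -B_1\frac{\partial f}{\partial x}$, whence $I\!\left(\frac{\partial f}{\partial y},g_1\right)+I\!\left(\frac{\partial f}{\partial y},f\right)=I\!\left(\frac{\partial f}{\partial y},B_1\right)+I\!\left(\frac{\partial f}{\partial y},\frac{\partial f}{\partial x}\right)$. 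Substituting the Milnor value $I\!\left(\frac{\partial f}{\partial y},\frac{\partial f}{\partial x}\right)=\mu$ from (\ref{milnor}) and Zariski's formula $I\!\left(\frac{\partial f}{\partial y},f\right)=\mu+\nu-1$ from (\ref{derivative}), the two copies of $\mu$ cancel and I obtain $I\!\left(\frac{\partial f}{\partial y},g_1\right)=I\!\left(\frac{\partial f}{\partial y},B_1\right)-\nu+1$.

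Finally I would eliminate $I\!\left(\frac{\partial f}{\partial y},g_1\right)$ between the two displayed equalities to reach exactly
\[
I(g_1,g_2)=I\!\left(\tfrac{\partial f}{\partial y},B_1\right)-I(B_1,g_1)-\nu+1.
\]
I expect the main obstacle to be bookkeeping rather than ideas: one must ensure that every intersection number manipulated is finite, so that the additivity and the reductions modulo $g_1$ and modulo $\frac{\partial f}{\partial y}$ are valid, and this is precisely where the normalisations of the basis enter — $\gcd\!\left(\frac{\partial f}{\partial y},B_1\right)=\gcd(B_1,g_1)=\gcd\!\left(\frac{\partial f}{\partial y},g_1\right)=1$ guarantee that $I\!\left(\frac{\partial f}{\partial y},B_1\right)$, $I(B_1,g_1)$ and $I\!\left(\frac{\partial f}{\partial y},g_1\right)$ are all finite. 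The one genuinely clever point is spotting the elimination that produces $g_1B_2-g_2B_1=u\frac{\partial f}{\partial y}$; once that identity is available, the remainder is formal intersection-theoretic algebra.
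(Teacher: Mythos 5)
Your proof is correct and takes essentially the same route as the paper: the identity $g_{1}B_{2}-g_{2}B_{1}=u\frac{\partial f}{\partial y}$ you extract is precisely the elimination the paper carries out in-line when it computes $I\left(f\frac{\partial f}{\partial y},g_{1}\right)=I\left(B_{1}g_{2}f,g_{1}\right)$ to obtain (\ref{aux1}), and your second evaluation of $I\left(\frac{\partial f}{\partial y},g_{1}\right)$ via (\ref{milnor}) and (\ref{derivative}) matches the paper's concluding chain of equalities. The only cosmetic difference is that you state the key identity explicitly and derive finiteness of $I(g_{1},g_{2})$ directly from it, which is a slightly cleaner packaging of the same argument.
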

\begin{proof}
By Lemma \ref{v-1} we have $\nu(g_1)=\nu_1-1<\nu$. As $f$ is
irreducible it follows that $\gcd(f,g_1)=1$ and $I\left
(f\frac{\partial f}{\partial y},g_1\right )<\infty$. So, from
(\ref{eq:01}) that
{\small$$ I\left(f\frac{\partial f}{\partial
y},g_{1}\right) =I\left(A_1B_2\frac{\partial f}{\partial
y}-A_{2}B_1\frac{\partial f}{\partial y},g_1\right)
 =I\left ( B_1B_2\frac{\partial f}{\partial x}-A_{2}B_1\frac{\partial f}{\partial y},g_1\right )
 =I\left(B_1g_2f,g_1\right).
$$} Hence,
\begin{equation}\label{aux1}
I(g_1,g_2)=I\left(\frac{\partial f}{\partial
y},g_{1}\right)-I(B_1,g_1).
\end{equation}
The Corollary \ref{cor1} insures that $\frac{\partial f}{\partial
y}$ and $g_{1}$ are coprime. So, by (\ref{aux1}) and using
(\ref{derivative}) we obtain
\begin{align*}
I\left(g_{1},g_{2}\right) & =I\left(\frac{\partial f}{\partial y},g_{1}\right)
+I\left(\frac{\partial f}{\partial y},f\right) -I\left(\frac{\partial f}{\partial y},f\right) -I\left(B_{1},g_{1}\right)\\
 & =I\left(\frac{\partial f}{\partial y},g_{1}f\right)-I\left(\frac{\partial f}{\partial y},f\right)-I\left(B_{1},g_{1}\right)\\
 & =I\left(\frac{\partial f}{\partial y},A_{1}\frac{\partial f}{\partial y}-B_{1}\frac{\partial f}{\partial x}\right)-\left(\mu+\nu-1\right)-I\left(B_{1},g_{1}\right)\\
 & =I\left(\frac{\partial f}{\partial
 y},B_{1}\right)-\nu+1-I\left(B_{1},g_{1}\right).
\end{align*}
\end{proof}
Let us consider the Tjurina number $\tau$ of a plane curve $S:\{f=0\}$, that is,
\[ \tau:=\dim_{\mathbb{C}}\frac{\mathbb{C}\{x,y\}}{\left (f,\frac{\partial f}{\partial
 y},\frac{\partial f}{\partial
 x} \right )}.
\]
Zariski (see Theorem 1 in \cite{zbMATH03232554}) considered the
torsion submodule $T\Omega^1_{\mathcal{O}/\mathbb{C}}$ of the
K\"ahler differential module $\Omega^1_{\mathcal{O}/\mathbb{C}}$
over $\mathcal{O}=\frac{\mathbb{C}\{x,y\}}{(f)}$ and he showed that
$\tau=\dim_{\mathbb{C}}T\Omega^1_{\mathcal{O}/\mathbb{C}}.
$\\
On the other hand, Michler (Theorem 1 in \cite{zbMATH01549534}) proved that
$T\Omega^1_{\mathcal{O}/\mathbb{C}}$ is isomorphic as
$\mathcal{O}$-module, to $\frac{\left ( \frac{\partial f}{\partial
 y},\frac{\partial f}{\partial x}\right ):\left ( f\right )}{\left (
 \frac{\partial f}{\partial  y},\frac{\partial f}{\partial  x}\right )}$.
 As $\left ( \frac{\partial f}{\partial  y},\frac{\partial f}{\partial  x}\right ):(f)$ is precisely the cofactor ideal of $S$, that is, $(g_1,g_2)$, one has
\[\tau=\dim_{\mathbb{C}}\frac{(g_1,g_2)}{\left (
\frac{\partial f}{\partial y},\frac{\partial f}{\partial x}\right  )}
 =\dim_{\mathbb{C}}\frac{\mathbb{C}\{x,y\}}{\left ( \frac{\partial f}{\partial
 y},\frac{\partial f}{\partial x}\right
 )}-\dim_{\mathbb{C}}\frac{\mathbb{C}\{x,y\}}{(g_1,g_2)}=\mu-I(g_1,g_2),
\]
that is,
\[\mu-\tau=I(g_1,g_2).\]
Denoting $\widetilde{\mu}$ the Milnor number
of $\widetilde{S}$ we provide in the next theorem a precise relation between $\mu-\tau$ and
$\widetilde{\mu}-\widetilde{\tau}$ by means of the analytic invariants
we have introduced previously for curves that admit a good basis.
\begin{thm}
\label{lem:Formula} If $S$ admits a good basis, then
\[
\mu-\tau=\widetilde{\mu}-\widetilde{\tau}
+\left(\nu_{1}-1\right)\left(\nu_{2}-1\right)+\mathfrak{i}\left(S\right)-1.
\]
\end{thm}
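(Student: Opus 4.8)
The identity $\mu-\tau=I(g_{1},g_{2})$ reduces everything to understanding how the cofactor intersection number behaves under one blow-up $E$, where $E(x_{1},y_{1})=(x_{1},x_{1}y_{1})$ and $\widetilde{S}$ meets the exceptional divisor $\{x_{1}=0\}$ only at $P_{0}=(0,-\epsilon)$. The plan is to fix a good basis normalized by Lemma \ref{v-1}, Lemma \ref{lem:We-can-suppose} and Corollary \ref{cor1}, so that $\nu(g_{1})=\nu_{1}-1$, $i(\omega_{1})=\mathfrak{i}(S)$, and the coprimality and tangent cone hypotheses of Lemma \ref{lem:The-intersection-of} are available. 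The first task is to record how the data transforms: since $\omega_{i}$ is non-dicritical one has $E^{*}\omega_{i}=x_{1}^{\nu_{i}}\widetilde{\omega}_{i}$ with $\widetilde{\omega}_{i}\in\Omega^{1}(\widetilde{S})$, whose $\dd y_{1}$-coefficient is $x_{1}B_{i}^{\sharp}$ ($B_{i}^{\sharp}$ the strict transform of $\{B_{i}=0\}$); a direct computation with $\widetilde{\omega}_{i}\wedge\dd\widetilde{f}$ gives the cofactor $\widetilde{g}_{i}$ of $\widetilde{\omega}_{i}$, in particular $\widetilde{g}_{1}=\nu B_{1}^{\sharp}+g_{1}^{\sharp}$, together with the Saito-like relation $A_{1}^{\sharp}B_{2}^{\sharp}-A_{2}^{\sharp}B_{1}^{\sharp}=(\mathrm{unit})\,\widetilde{f}$. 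I would emphasize that $\{\widetilde{\omega}_{1},\widetilde{\omega}_{2}\}$ is \emph{not} a Saito basis of $\widetilde{S}$: the wedge $\widetilde{\omega}_{1}\wedge\widetilde{\omega}_{2}$ carries a spurious factor $x_{1}$, and this factor is precisely what will generate the index term.

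Next, I would produce the product term $(\nu_{1}-1)(\nu_{2}-1)$. Starting from the expression $I(g_{1},g_{2})=I(\partial f/\partial y,g_{1})-I(B_{1},g_{1})$ in (\ref{aux1}) and applying Noether's formula $I(G,H)=\nu(G)\nu(H)+\sum_{P}I_{P}(G^{\sharp},H^{\sharp})$ to each factor, I would use that $\partial f/\partial y$ has tangent cone a power of $y+\epsilon x$ (so its strict transform meets the divisor only at $P_{0}$, and in fact $\partial\widetilde{f}/\partial y_{1}=(\partial f/\partial y)^{\sharp}$) and Lemma \ref{lem:The-intersection-of} to confine all infinitely near contributions to $P_{0}$. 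The multiplicity products then collapse, using $\nu(g_{1})=\nu_{1}-1$ and $\nu-\nu_{1}=\nu_{2}$, to $(\nu_{1}-1)(\nu_{2}-1)$, leaving
\[\mu-\tau=(\nu_{1}-1)(\nu_{2}-1)+I_{P_{0}}\!\left(\tfrac{\partial\widetilde{f}}{\partial y_{1}},g_{1}^{\sharp}\right)-I_{P_{0}}(B_{1}^{\sharp},g_{1}^{\sharp}).\]

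The third step is to recognise the two local terms as $I_{P_{0}}(\widetilde{g}_{1},\widetilde{g}_{2})$. Replaying the computation of (\ref{aux1}) at $P_{0}$, now with the Saito-like relation, gives $I_{P_{0}}(\widetilde{g}_{1},\widetilde{g}_{2})=I_{P_{0}}(\partial\widetilde{f}/\partial y_{1},\widetilde{g}_{1})-I_{P_{0}}(B_{1}^{\sharp},\widetilde{g}_{1})$. Because $\widetilde{g}_{1}\equiv g_{1}^{\sharp}\pmod{B_{1}^{\sharp}}$, the last term is $I_{P_{0}}(B_{1}^{\sharp},g_{1}^{\sharp})$, and it remains to check $I_{P_{0}}(\partial\widetilde{f}/\partial y_{1},\widetilde{g}_{1})=I_{P_{0}}(\partial\widetilde{f}/\partial y_{1},g_{1}^{\sharp})$. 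Here the delicate point is that $\widetilde{S}$ is in general tangent to the exceptional divisor, so Zariski's formula (\ref{derivative}) must be used in its general form $I_{P_{0}}(\widetilde{f},\partial\widetilde{f}/\partial y_{1})=\widetilde{\mu}+I_{P_{0}}(\widetilde{f},\{x_{1}=0\})-1=\widetilde{\mu}+\nu-1$, with the \emph{original} multiplicity $\nu$ (since $\widetilde{f}(0,y_{1})=(y_{1}+\epsilon)^{\nu}$); combining this with $I_{P_{0}}(\partial\widetilde{f}/\partial x_{1},\partial\widetilde{f}/\partial y_{1})=\widetilde{\mu}$ and $\widetilde{g}_{1}=\nu B_{1}^{\sharp}+g_{1}^{\sharp}$ yields the required equality. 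Thus $\mu-\tau=(\nu_{1}-1)(\nu_{2}-1)+I_{P_{0}}(\widetilde{g}_{1},\widetilde{g}_{2})$.

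Finally, I would compare $I_{P_{0}}(\widetilde{g}_{1},\widetilde{g}_{2})$ with $\widetilde{\mu}-\widetilde{\tau}=I_{P_{0}}(h_{1},h_{2})$, where $h_{1},h_{2}$ are the cofactors of a genuine Saito basis $\{\eta_{1},\eta_{2}\}$ of $\widetilde{S}$. Writing $\widetilde{\omega}_{i}=\sum_{j}c_{ij}\eta_{j}$, the Saito criterion applied to both sides forces $\det(c_{ij})=(\mathrm{unit})\,x_{1}$, so $\binom{\widetilde{g}_{1}}{\widetilde{g}_{2}}=C\binom{h_{1}}{h_{2}}$ with $\det C\sim x_{1}$; a Hilbert--Burch/Fitting-ideal computation then gives $I_{P_{0}}(\widetilde{g}_{1},\widetilde{g}_{2})-(\widetilde{\mu}-\widetilde{\tau})=\dim_{\mathbb{C}}\mathbb{C}\{x_{1},y_{1}\}/(x_{1},\widetilde{g}_{1},\widetilde{g}_{2})$. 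Restricting to $x_{1}=0$ this dimension equals $\min_{i}\nu_{y_{1}=-\epsilon}\,\widetilde{g}_{i}(0,y_{1})$, and the cofactor relation restricted to the divisor shows $\nu_{y_{1}=-\epsilon}\,\widetilde{g}_{i}(0,y_{1})=i(\omega_{i})-1$; the Proposition on good bases then identifies the minimum with $\mathfrak{i}(S)-1$. Substituting gives the stated formula. I expect this last step --- pinning the defect of $\{\widetilde{\omega}_{1},\widetilde{\omega}_{2}\}$ from being a Saito basis to the colength $\dim_{\mathbb{C}}\mathbb{C}\{x_{1},y_{1}\}/(x_{1},\widetilde{g}_{1},\widetilde{g}_{2})$ and evaluating it as $\mathfrak{i}(S)-1$ --- to be the main obstacle, with the tangency of $\widetilde{S}$ to the exceptional divisor in the third step the secondary one.
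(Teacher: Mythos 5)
Your proposal is correct, and its first half runs along the paper's own tracks: the paper likewise reduces everything to $I(g_1,g_2)$, localizes at $(0,-\epsilon)$ by Noether's formula with Lemma \ref{lem:The-intersection-of} confining all infinitely near contributions to that single point, identifies the transformed cofactor (its $g_1'=\tilde g_1+\nu\tilde B_1$ is your $\tilde g_1=\nu B_1^{\sharp}+g_1^{\sharp}$), and handles the tangency of $\widetilde S$ to the exceptional divisor exactly as you flag, through $I_{(0,-\epsilon)}(\tilde f,x_1)=\nu$ in place of the multiplicity in (\ref{derivative}). Where you genuinely diverge is the final step. The paper manufactures an honest local Saito basis of $\widetilde S$ by dividing $\tilde\omega_2-q(y_1)\tilde\omega_1$ by $x_1$, with $q$ holomorphic at $-\epsilon$ precisely because $i(\omega_1)\le i(\omega_2)$; then $\widetilde\mu-\widetilde\tau=I_{(0,-\epsilon)}(g_1',g_2')$ appears directly and the index enters via $I_{(0,-\epsilon)}(x,g_1')=i(\omega_1)-1$. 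You instead keep the non-basis pair $\{\tilde\omega_1,\tilde\omega_2\}$ and measure its defect abstractly, and your Fitting-ideal identity is true, with a clean proof not in the paper: writing $R=\mathbb{C}\{x_1,y_1\}$ at $(0,-\epsilon)$, the Koszul relation gives $(h_1,h_2)/(\tilde g_1,\tilde g_2)\cong R^2/\bigl(\mathrm{im}\,C^{T}+R\,(h_2,-h_1)^{T}\bigr)$; the adjugate identity $C^{T}\mathrm{adj}(C^{T})=ux_1I$ shows $x_1$ kills this module; and over the discrete valuation ring $\mathbb{C}\{y_1\}$ the $2\times2$ minors of the resulting $2\times3$ presentation matrix are, up to sign, $0$, $\tilde g_1(0,y_1)$, $\tilde g_2(0,y_1)$, so the length is $\min_i\nu_{y_1=-\epsilon}\tilde g_i(0,y_1)$, which your restriction of the cofactor relation to $x_1=0$ evaluates as $\min_i(i(\omega_i)-1)=\mathfrak{i}(S)-1$, in agreement with the paper's computation of $I_{(0,-\epsilon)}(x,g_1')$. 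Your route buys intrinsicness: any Saito basis $\{\eta_1,\eta_2\}$ of $\widetilde S$ works, no ordering of the two indices is needed to build anything, and the factor $x_1$ in $\det C$ is exposed as the conceptual source of the term $\mathfrak{i}(S)-1$. The paper's explicit construction buys reusability: the same division trick is exactly what drives the odd-multiplicity case in the proof of Theorem \ref{theo-tau-min}.

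Two points to tighten. First, in your third step the equality $I_{(0,-\epsilon)}\bigl(\partial\tilde f/\partial y_1,\tilde g_1\bigr)=I_{(0,-\epsilon)}\bigl(\partial\tilde f/\partial y_1,g_1^{\sharp}\bigr)$ is not a formal consequence of $\tilde g_1=\nu B_1^{\sharp}+g_1^{\sharp}$, since $I(\partial\tilde f/\partial y_1,\cdot)$ is not a valuation when $\partial\tilde f/\partial y_1$ is reducible. The clean derivation reduces the local cofactor relation modulo $\partial\tilde f/\partial y_1$, which together with your two cited ingredients gives $I_{(0,-\epsilon)}(\partial\tilde f/\partial y_1,\tilde g_1)=I_{(0,-\epsilon)}(\partial\tilde f/\partial y_1,B_1^{\sharp})$, and then obtains $I_{(0,-\epsilon)}(\partial\tilde f/\partial y_1,g_1^{\sharp})=I_{(0,-\epsilon)}(\partial\tilde f/\partial y_1,B_1^{\sharp})$ by Noether-localizing the downstairs identity $I(\partial f/\partial y,g_1)=I(\partial f/\partial y,B_1)-\nu+1$, which follows from (\ref{aux1}) and the lemma preceding the theorem. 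Second, you should record that neither $\omega_1$ nor $\omega_2$ is dicritical --- this follows from the good-basis hypothesis as in the proof of the paper's Proposition on good bases --- so that $x_1\nmid\tilde g_i$, the orders $\nu_{y_1=-\epsilon}\tilde g_i(0,y_1)$ are finite, and all the local intersection numbers you manipulate are defined.
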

\begin{proof}
By symmetry, one can suppose $
\mathfrak{i}\left(S\right)=\min\left\{
i\left(\omega_{1}\right),i\left(\omega_{2}\right)\right\}
=i\left(\omega_{1}\right).$ By Lemma \ref{lem:The-intersection-of}
and the Max-Noether formula one has,
\[
\mu-\tau=I(g_1,g_2)=I_{(0,-\epsilon)}\left(\tilde{g}_{1},\tilde{g}_{2}\right)+\nu\left(g_{1}\right)\nu\left(g_{2}\right),
\]
where $\widetilde{H}:=E^*(H)$ and $E$ denotes the standard
blowing-up of the origin in $\mathbb{C}^2$.\\
In addition, the previous lemma and Lemma
\ref{lem:The-intersection-of}, yield {\small
\begin{align*}
I(g_1,g_2)
 & =I\left(\frac{\partial f}{\partial y},B_{1}\right)-I\left(B_{1},g_{1}\right)-\nu+1\\
 & =I_{(0,-\epsilon)}\left(\tilde{\frac{\partial f}{\partial y}},\tilde{B}_{1}\right)-
 I_{(0,-\epsilon)}\left(\tilde{B}_{1},\tilde{g}_{1}\right)+\nu\left(\frac{\partial f}{\partial y}\right)\nu\left(B_{1}\right)-\nu\left(B_{1}\right)\nu\left(g_{1}\right)-\nu+1.
\end{align*}
} If $\tilde{\omega}_{i}=\frac{E^{*}\omega_{i}}{x^{\nu_{i}}}$, then
the Saito criterion yields $ x^{\nu_{1}}\tilde{\omega}_{1}\wedge
x^{\nu_{2}}\tilde{\omega}_{2}=\widetilde{u}x^{\nu}\tilde{f}x\dd
x\wedge\dd y. $ Since we have a good basis, that is,
$\nu_{1}+\nu_{2}=\nu,$ one has $
\tilde{\omega}_{1}\wedge\tilde{\omega}_{2}=u\tilde{f}x\dd x\wedge\dd
y. $ Locally around $(0,-\epsilon)$ for $i=1,2$ we have
$$
\tilde{\omega}_{i} =\left(A_{i}^{\nu_{i}}\left(1,y\right)+yB_{i}^{\nu_{i}}\left(1,y\right)+x\left(\cdots\right)\right)\dd x+x\left(B_{i}^{\nu_{i}}\left(1,y\right)+\left(\cdots\right)\right)\dd y\\
$$
We notice that the form
\[
\overline{\omega}_{2}=\frac{1}{x}\left(\tilde{\omega}_{2}-\frac{A_{2}^{\nu_{2}}\left(1,y\right)+yB_{2}^{\nu_{2}}\left(1,y\right)}{A_{1}^{\nu_{1}}\left(1,y\right)+yB_{1}^{\nu_{1}}\left(1,y\right)}\tilde{\omega}_{1}\right)
\]
is holomorphic at $\left(0,-\epsilon\right)$ and $\left\{
\tilde{\omega}_{1},\overline{\omega}_{2}\right\} $ is a Saito basis
for $\widetilde{S}:\{\tilde{f}=0\}$. A computation shows that the
cofactor associated to $\tilde{\omega}_{1}$ is written $
g_{1}^{'}=\tilde{g}_{1}+\nu\tilde{B}_{1}. $ Moreover, one has $
\tilde{\omega}_{1}=\left(\tilde{A}_{1}+y\tilde{B}_{1}\right)\dd
x+x\tilde{B}_{1}\dd y=A'\dd x+B'\dd y.
$
Now,
\[
\left(A_{1}^{\nu_{1}}\left(1,y\right)+yB_{1}^{\nu_{1}}\left(1,y\right)+x\left(\cdots\right)\right)\frac{\partial\tilde{f}}{\partial y}-x\tilde{B}_{1}\frac{\partial\tilde{f}}{\partial x}=g_{1}^{'}\tilde{f}.
\]
If $x$ divides $g_{1}^{'}$ then $\tilde{\omega}_{1}$ would be
dicritical and this is not possible. Therefore,
\begin{align*}
I_{(0,-\epsilon)}\left(x,g_{1}^{'}\tilde{f}\right) &
=I_{(0-\epsilon)}\left(A_{1}^{\nu_{1}}\left(1,y\right)+yB_{1}^{\nu_{1}}\left(1,y\right),x\right)+I_{(0,-\epsilon)}\left(x,\frac{\partial\tilde{f}}{\partial
y}\right).
\end{align*}
and, by Corollary \ref{cor1}, $
I_{(0,-\epsilon)}\left(x,g_{1}^{'}\right)=i\left(\omega_{1}\right)-1=\mathfrak{i}\left(S\right)-1.
$\\
Notice that $\tilde{B}_{1}$ and $g_{1}^{'}$ cannot have a common
divisor, since it would be a common divisor of $\tilde{g}_{1}$ and
$\tilde{B}_{1}$ that is impossible by Lemma
\ref{lem:We-can-suppose}. So,
$$
I_{(0,-\epsilon)}\left(\tilde{B}_{1},\tilde{g}_{1}\right)
=I_{(0,-\epsilon)}\left(x\tilde{B}_{1},g_{1}^{'}\right)-\mathfrak{i}\left(S\right)+1
 =I_{(0,-\epsilon)}\left(B_{1}^{'},g_{1}^{'}\right)-\mathfrak{i}\left(S\right)+1.
$$
Moreover,
\begin{align*}
I_{(0,-\epsilon)}\left(\frac{\partial \tilde{f}}{\partial y},\tilde{B}_{1}\right) & =I_{(0,-\epsilon)}\left(\frac{\partial\tilde{f}}{\partial y},B_{1}^{'}\right)-I_{(0,-\epsilon)}\left(\frac{\partial\tilde{f}}{\partial y},x\right)\\
 & =I_{(0,-\epsilon)}\left(\frac{\partial\tilde{f}}{\partial
 y},B_{1}^{'}\right)-I_{(0,-\epsilon)}\left(\tilde{f},x\right)+1.
\end{align*}
So, as $\frac{\partial \tilde{f}}{\partial y}=\tilde{\frac{\partial f}{\partial y}}$ and combining all the above relation yields
\begin{align*}
\mu-\tau & =I_{(0,-\epsilon)}\left(\frac{\partial\tilde{f}}{\partial y},B_{1}^{'}\right)-I_{(0,-\epsilon)}\left(\tilde{f},x\right)+1-\left(I_{y=-\epsilon}\left(B_{1}^{'},g_{1}^{'}\right)-\mathfrak{i}\left(S\right)+1\right)\\
 & \qquad+\nu\left(\frac{\partial f}{\partial y}\right)\nu\left(B_{1}\right)-\nu\left(B_{1}\right)\nu\left(g_{1}\right)-\nu+1\\
 & =I_{(0,-\epsilon)}\left(g_{1}^{'},g_{2}^{'}\right)+\left(\nu-1\right)\nu_{1}-\nu_{1}\nu\left(g_{1}\right)-\nu+\mathfrak{i}\left(S\right).
\end{align*}
As
$I_{(0,-\epsilon)}\left(g_{1}^{'},g_{2}^{'}\right)=\widetilde{\mu}-\widetilde{\tau}$
and $\nu\left(g_{1}\right)=\nu_1-1$, we obtain finally
\[
\mu-\tau=\widetilde{\mu}-\widetilde{\tau}+\left(\nu_{1}-1\right)\left(\nu_{2}-1\right)+\mathfrak{i}\left(S\right)-1.
\]
\end{proof}
Let us analyze the examples previously considered.
\begin{ex*}[\textbf{1}]
For $S_1$ we have a good basis with $\nu_1=1$, $\nu_2=p-1$ and
$\mathfrak{i}\left(S_1\right)=1$, then $\mu-\tau=0$ as classically
known.
\end{ex*}
\begin{ex*}[\textbf{2}]
Notice that for $S_2$ we have
$\mathfrak{i}\left(S_2\right)=i\left(\omega_1\right)=1$,
$\nu_1=\nu_2=2$ and $\widetilde{S}_3$ is regular, so
$\widetilde{\mu}-\widetilde{\tau}=0$. In this way,
\[1=I(g_1,g_2)=\mu-\tau=0+(2-1)(2-1)+1-1.
\]
So, the formula in the previous theorem holds although $S_2$ does
not admit any good basis.
\end{ex*}
\begin{ex*}[\textbf{3}]
For $S_3$ we get
$\mathfrak{i}\left(S_4\right)=i\left(\omega_1\right)=2$, $\nu_1=2,
\nu_2=3$ and $\widetilde{S}_3$ is analytically equivalent to $S_2$,
so $\widetilde{\mu}-\widetilde{\tau}=1$. In this way,
\[4=I(g_1,g_2)=\mu-\tau=1+(2-1)(3-1)+2-1.
\]
\end{ex*}
\begin{ex*}[\textbf{4}]
As we presented above, $S_4$ does not have a good basis. We have
 $\mathfrak{i}\left(S_4\right)=i\left(\omega_1\right)=1$, $\nu_1=\nu_2=3$ and $\widetilde{\mu}-\widetilde{\tau}=0$, but in this case,
\[5=I(g_1,g_2)=\mu-\tau\neq 4=0+(3-1)(3-1)+1-1.\]
A more detailed analysis shows that Lemma \ref{lem:The-intersection-of}
 is not valid in this case because the intersection of the tangent cone
  of $g_1$ and $g_1$ is $x=0$ that is distinct to the tangent cone $y=0$ of $S_4$.
\end{ex*}

\section{The minimal Tjurina number and the Dimca-Greuel question for plane branches. }

Given a curve $S$, we denote by $L=L(S)$ its topological class. Although the Milnor number is constant in $L$, the same is not true for the Tjurina number $\tau(S)$. On the other hand, as $\tau(S)$ is upper semicontinuous, the minimum value $\tau_{\min}$ for curves in $L$ is achieved generically and it should be computed by the sole topological data (see Chapitre III, Appendice of \cite{zariski} by Teissier).\\
For a topological class $L$ given by characteristic exponents $(\beta_0,\beta_1)$, Delorme in \cite{Delorme1978} presented a formula for the dimension of the generic component of the Moduli space that allow us to compute $\tau_{\min}$. For an arbitrary topological class, Peraire (see \cite{Peraire}) presented an algorithm to compute the $\tau_{\min}$ using flag of the Jacobian ideal.\\
In this section, using the last theorem and results of \cite{YoyoBMS}, we give an alternative method to compute $\tau_{\min}$ in a fixed topological class $L$ and as a bonus we are able to answer a question of Dimca-Greuel for the irreducible plane curves.\\
If $S$ admits a good basis we can not insure that the same is valid
for $\widetilde{S}$ (see Example (3)). However, this property is
true generically.
\begin{thm}\label{theo-tau-min}
Let $L$ the topological class of plane branch given by the characteristic exponents $(\beta_0,\beta_1,\ldots ,\beta_s)$, $\tau_{\min}$
the minimal Tjurina number in $L$ and $\widetilde{\tau}_{\min}$
the minimal Tjurina number in $\widetilde{L}$. If $S$ is generic in $L$, then
\begin{equation}\label{formula}
\mu-\tau_{\min}=\widetilde{\mu}-\widetilde{\tau}_{\min}
 +\left ( \left [\frac{\beta_0}{2}\right ]-1\right )\left (\beta_0-\left [\frac{\beta_0}{2}\right ]-1\right )+\mathfrak{i}\left(S\right)-1.
\end{equation}
Moreover, if $n=\left\lceil \frac{\beta_{1}}{\beta_{1}-\beta_0}\right\rceil $, then  $\mathfrak{i}\left(S\right)=\left [ \frac{\beta_0}{2}\right ]+1-p_1(S)$, where $p_1(S)$ can be computed in the following
way:
\[
\bullet\  \mbox{if}\ \beta_0\ \mbox{is even then}\ p_{1}\left(S\right)=\left\{ \begin{aligned}1 & \ \textup{ if }n=2\\
1 & \ \textup{ if \ensuremath{\beta_{1}} is even}\\
\frac{n-1}{2} & \ \textup{ if \ensuremath{\beta_{1}} is odd and \ensuremath{n} odd}\\
\frac{n-2}{2} & \ \textup{ if \ensuremath{\beta_{1}} is odd and \ensuremath{n} is even}
\end{aligned}
\right.
\]
\[
\bullet\  \mbox{if}\ \beta_0\ \mbox{is odd then}\ p_{1}\left(S\right)=\left\{ \begin{aligned}0 & \ \textup{ if }n=2\\
1 & \ \textup{ if \ensuremath{\beta_{1}} is odd}\\
\frac{n-3}{2} & \ \textup{ if \ensuremath{\beta_{1}} is even and \ensuremath{n} odd}\\
\frac{n-2}{2} & \ \textup{ if \ensuremath{\beta_{1}} is even and \ensuremath{n} is even.}
\end{aligned}
\right.
\]
\end{thm}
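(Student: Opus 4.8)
The plan is to separate the statement into two parts: the one-step recursion (\ref{formula}), which I expect to fall out of Theorem \ref{lem:Formula} with little more than bookkeeping, and the explicit value of $\mathfrak{i}\left(S\right)$, which is the genuinely new computation.

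First I would establish (\ref{formula}). For a plane branch the multiplicity equals the first characteristic exponent, so $\nu=\nu(S)=\beta_0$. By the Generic Basis Theorem a generic $S\in L$ carries a good basis with
\[
\nu_1=\left[\tfrac{\beta_0}{2}\right],\qquad \nu_2=\beta_0-\left[\tfrac{\beta_0}{2}\right],
\]
in both parities of $\beta_0$. Feeding these into Theorem \ref{lem:Formula} turns $\left(\nu_1-1\right)\left(\nu_2-1\right)$ into exactly the product $\left(\left[\tfrac{\beta_0}{2}\right]-1\right)\left(\beta_0-\left[\tfrac{\beta_0}{2}\right]-1\right)$ appearing in (\ref{formula}), and by genericity $\tau(S)=\tau_{\min}$. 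The one point that is not automatic is the replacement of $\widetilde{\tau}(\widetilde{S})$ by $\widetilde{\tau}_{\min}$: this is legitimate only if the strict transform of a generic $S\in L$ is itself generic in $\widetilde{L}$. I would secure this by a semicontinuity argument: $\tau$ and $\widetilde{\tau}$ are upper semicontinuous, so each of the conditions ``$S$ admits a good basis'', ``$\tau(S)=\tau_{\min}$'' and ``$\widetilde{\tau}(\widetilde{S})=\widetilde{\tau}_{\min}$'' is Zariski open; the last is the preimage of an open dense locus under the map $S\mapsto\widetilde{S}$ on moduli, which is dominant (in the spirit of \cite{YoyoBMS}), hence open dense. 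Intersecting the three open dense sets in the irreducible parameter space of $L$ produces a generic $S$ satisfying all of them simultaneously, and (\ref{formula}) follows.

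The substantive part is the value of $\mathfrak{i}\left(S\right)$. By the proposition identifying $\mathfrak{i}\left(S\right)$ with $\min\left\{i\left(\omega_1\right),i\left(\omega_2\right)\right\}$ for a good basis, together with Lemma \ref{v-1}, I may fix the basis so that $\mathfrak{i}\left(S\right)=i\left(\omega_1\right)$ and $\nu\left(g_1\right)=\nu_1-1$. By definition
\[
i\left(\omega_1\right)=\nu_{y_1=-\epsilon}\left(\phi_1\right),\qquad \phi_1\left(y_1\right):=A_1^{\left(\nu_1\right)}\left(1,y_1\right)+y_1 B_1^{\left(\nu_1\right)}\left(1,y_1\right),
\]
a polynomial of degree at most $\nu_1+1=\left[\tfrac{\beta_0}{2}\right]+1$. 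Setting $p_1(S):=\left[\tfrac{\beta_0}{2}\right]+1-i\left(\omega_1\right)$, the task is precisely to count the zeros of $\phi_1$ away from the branch direction $y_1=-\epsilon$ (the point at infinity of the exceptional divisor contributing when $\deg\phi_1<\nu_1+1$); these are the singular directions of the tangent-cone foliation $\omega_1^{\left(\nu_1\right)}$ on the exceptional divisor other than the one carrying $\widetilde{S}$. Because $\nu\left(g_1\right)=\nu_1-1<\nu_1$, Lemma \ref{i-v1} does not apply and $\phi_1$ need not be a pure power of $\left(y_1+\epsilon\right)$, so $p_1(S)>0$ is possible; determining $p_1(S)$ exactly is the whole content.

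To carry out this count I would invoke the explicit generic initial $1$-jet $\omega_1^{\left(\nu_1\right)}$ furnished by \cite{YoyoBMS}: for $S$ generic, $\omega_1^{\left(\nu_1\right)}$ is the initial form of a generic good-basis element of multiplicity $\nu_1$, constrained only by the Saito relation
\[
\omega_1^{\left(\nu_1\right)}\wedge\omega_2^{\left(\nu_2\right)}=c\,\left(y+\epsilon x\right)^{\beta_0}\,\dd x\wedge\dd y,\qquad c\neq0,
\]
which forces $B_1^{\left(\nu_1\right)}$ to carry a power of $\left(y+\epsilon x\right)$ and pins down the leading behaviour of $\phi_1$ at $-\epsilon$. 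The surviving zeros of $\phi_1$ are governed by the first Newton slope $\beta_1/\beta_0$ of $f$, and this is where $n=\left\lceil\tfrac{\beta_1}{\beta_1-\beta_0}\right\rceil=1+\left\lceil\tfrac{\beta_0}{\beta_1-\beta_0}\right\rceil$ enters: it counts the blow-ups in the first packet of the resolution, i.e. the infinitely near points at which the strict transform keeps multiplicity $\beta_0$, equivalently the tangency of the branch to its tangent line. Reading off the factorization of $\phi_1$ then shows that the non-$(-\epsilon)$ zeros occur in a pattern controlled by $n$, with the parity of $\beta_0$ deciding whether $\deg\phi_1=\nu_1+1$ or a zero sits at infinity on the exceptional divisor, and the parity of $\beta_1$ deciding whether a single central zero survives. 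Matching these regimes yields the four-way value of $p_1(S)$, and hence $\mathfrak{i}\left(S\right)=\left[\tfrac{\beta_0}{2}\right]+1-p_1(S)$.

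The main obstacle is exactly this last computation. The recursion (\ref{formula}) is bookkeeping on top of Theorem \ref{lem:Formula}, but $\mathfrak{i}\left(S\right)$ is not a topological invariant (Examples $2$ and $3$ already display distinct indices within one topological class), so its generic value cannot be read from $\left(\beta_0,\beta_1,\ldots,\beta_s\right)$ without the explicit generic normal form of \cite{YoyoBMS}. Controlling the factorization of $\phi_1$ finely enough to separate the four subcases — and in particular handling the boundary case $n=2$, where $\phi_1$ degenerates to a pure power and $p_1$ collapses to its minimal value — is where the careful arithmetic of $\beta_0$, $\beta_1$ and $n$ must be done.
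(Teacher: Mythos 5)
Your first half is sound and essentially the paper's: feed $\nu_1=\left[\frac{\beta_0}{2}\right]$, $\nu_2=\beta_0-\left[\frac{\beta_0}{2}\right]$ from the Generic Basis Theorem into Theorem \ref{lem:Formula}, and use genericity to replace $\tau$ and $\widetilde{\tau}$ by their minima; your explicit semicontinuity argument for the strict transform of a generic curve being generic is a point the paper leaves implicit, and is welcome. The genuine gap is in the second half. The four-case value of $p_1(S)$ is never derived: you reduce it to ``controlling the factorization of $\phi_1$'' and stop exactly there, while that factorization --- how the zeros of $A_1^{(\nu_1)}(1,y_1)+y_1B_1^{(\nu_1)}(1,y_1)$ distribute as a function of $n$ and the parities of $\beta_0$ and $\beta_1$ --- is the entire content of the assertion. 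The paper does not perform this computation either; it imports it wholesale from \cite{YoyoBMS}: both the index-sum identity $\nu_1+1=\sum_{q\in\mathbb{P}^1}\mathrm{Ind}(\widetilde{\mathcal{F}},C,q)=i(\omega_1)+p_1(S)$ (your degree count of $\phi_1$ on $\mathbb{P}^1$ is this identity in disguise, so it restates the problem rather than solving it) and the case tables for $p_1$ (the cases $\delta_1=0,\delta_2=1$ and $\delta_1=1,\delta_2=1$ in the notation of that paper). What the paper actually proves are the two reductions that make the citation applicable, and your plan has neither. For even $\beta_0$, it replaces the generic good basis by generic combinations $\omega_1=\omega'_1+\alpha_2\omega'_2$, $\omega_2=\omega'_2+\alpha_1\omega'_1$ so that $i(\omega_1)=i(\omega_2)$, whence $\mathfrak{i}(S)=i(\omega_1)$ in exactly the configuration \cite{YoyoBMS} computes; your normalization via Lemma \ref{v-1} does give $\mathfrak{i}(S)=i(\omega_1)$, but supplies no reason that this particular $\omega_1$ has the generic initial jet whose index \cite{YoyoBMS} describes.

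More seriously, your uniform treatment of both parities (``the parity of $\beta_0$ deciding whether a zero sits at infinity'') is unsupported and does not match how the odd case works. For odd $\beta_0$ the paper adjoins a generic line $l:\{x=0\}$, applies the even-multiplicity argument to $S\cup l$ (getting $\nu(\omega'_1)=\nu(\omega'_2)=\left[\frac{\beta_0}{2}\right]+1$ and $\mathfrak{i}(S\cup l)=i(\omega'_1)=i(\omega'_2)$), and then descends to a good basis of $S$ via the division $\frac{1}{x}\left(\omega_{2}-\frac{a_{2}(y)+\alpha_1a_1(y)}{a_{1}(y)+\alpha_2a_2(y)}\omega_{1}\right)$, using the inequality $i\left(\frac{1}{x}(\cdots)\right)\geq i(\omega_1)$ to conclude $\mathfrak{i}(S)=i(\omega_1)$. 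This descent is what makes the odd-$\beta_0$ rows of the $p_1$ table --- which in \cite{YoyoBMS} refer to the curve $S\cup l$, not to $S$ --- compute $\mathfrak{i}(S)$; note also that the form realizing $\mathfrak{i}(S)$ here has multiplicity $\frac{\beta_0+1}{2}=\nu_2$, not $\nu_1$, so your setup (fix $\omega_1$ with $\nu(g_1)=\nu_1-1$ and count zeros of its $\phi_1$) is aimed at the wrong basis element in the odd case. Without the $S\cup l$ reduction and without actually executing (or precisely citing) the computation of $p_1$, the proof of the second assertion does not close.
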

\begin{proof} Suppose that $\beta_0=\nu(S)$ is even. According to the Generic Basis Theorem, $S$ admits a good basis $\{\omega'_1,\omega'_2\}$ with $\nu(\omega'_{1})=\nu(\omega'_{2})=\frac{\beta_0}{2}$. For generic $\alpha_1,\alpha_2\in \mathbb{C}$
$
\left\{\omega_1=\omega'_1+\alpha_2\omega'_2,\omega_2=\omega'_2+\alpha_1\omega'_1\right\}
$ remain a good basis with $\nu_1=\nu_2=\frac{\beta_0}{2}$ and
$i(\omega_1)=i(\omega_2)$.\\
Now, according to \cite{YoyoBMS} - using the notations of the
mentioned paper, it refers to the case $\delta_{1}=0$ and
$\delta_{2}=1$ - we obtain
$\nu_1+1=\frac{\beta_0}{2}+1=\sum_{q\in\mathbb{P}^1}{\rm
Ind}(\widetilde{\mathcal{F}},C,q)=i(\omega_1)+p_1(S),$ that is,
\[\mathfrak{i}\left(S\right)=i\left(\omega_1 \right)=\nu_1+1-p_1\left(S\right)=\frac{\beta_0}{2}+1-p_1\left(S\right)=\left [\frac{\beta_0}{2}\right ]+1-p_1(S)\]
where $p_{1}\left(S\right)$ is described in \cite{YoyoBMS}.\\
Now, suppose $\beta_0$ is odd and let $\left\{
\omega'_{1},\omega'_{2}\right\}$ be a Saito basis for $S\cup l$ with
$l$ a generic line that without loss of generality can be considered
$x=0$. As $\nu(S\cup l)$ is even, by the same above argument, we can
suppose that
\[\nu(\omega'_1)=\nu(\omega'_2)=\frac{\beta_0+1}{2}=\left
[\frac{\beta_0}{2}\right ]+1\ \ \mbox{and}\ \ \mathfrak{i}(S\cup
l)=i\left(\omega'_1\right)=i\left(\omega'_2\right).\] Denoting
$\omega'_{i}
=\left(a_{i}\left(y\right)+x\left(\cdots\right)\right)\textup{d}x+x\left(\cdots\right)\textup{d}y
$ and considering generic $\alpha_1,\alpha_2\in \mathbb{C}$ we
obtain a good Saito basis
$\{\omega_1=\omega'_{1}+\alpha_2\omega'_{2},\omega_2=\omega'_{2}+\alpha_1\omega'_{1}\}$
such that
$\nu\left(a_{1}(y)+\alpha_2a_2(y)\right)=\nu\left(a_{2}(y)+\alpha_1a_1(y)\right)$,
\[i\left(\omega_{1}\right)=i\left(\omega'_{1}\right)=i\left(\omega'_{2}\right)=i\left(\omega_{2}\right)
\ \mbox{and}\
\nu(\omega_1)=\nu(\omega'_1)=\nu(\omega'_2)=\nu(\omega_2).\] Now the
family
\[
\left\{
\omega_{1},\frac{1}{x}\left(\omega_{2}-\frac{a_{2}(y)+\alpha_1a_1(y)}{a_{1}(y)+\alpha_2a_2(y)}\omega_{1}\right)\right\}
\]
is a good Saito basis for $S$. Finally, since
$i\left(\frac{1}{x}\left(\omega_{2}-\frac{a_{2}(y)+\alpha_1a_1(y)}{a_{1}(y)+\alpha_2a_2(y)}\omega_{1}\right)\right)\geq
i\left(\omega_{1}\right)$, one has $
\mathfrak{i}\left(S\right)=i\left(\omega_{1}\right). $
By the description of $p_1(S\cup l)$ given in \cite{YoyoBMS} - using
the notations of the article, it refers to the case $\delta_{1}=1$
and $\delta_{2}=1$ - we get
\[\mathfrak{i}\left(S\right)=i\left(\omega_1 \right)=\frac{\nu\left(S\right)+1}{2}-p_1\left(S\right)=
\left [\frac{\beta_0}{2}\right ]+1-p_1\left(S\right).\]
Thus, the proof of the formula is a consequence of Theorem \ref{lem:Formula} noticing that by the Generic Basis Theorem we have $\nu_1=\left [\frac{\beta_0}{2}\right ]$ and $\nu_2=\beta_0-\left [\frac{\beta_0}{2}\right ]$.
\end{proof}
\begin{ex*}[\textbf{5}]
In \cite{Peraire}, Peraire computed the minimum Tjurina number for
the topological class whose characteritic exponents are $(9,12,17)$.
After five blowing-ups, we obtain a curve with multiplicity $2$. The
corresponding characteristics exponents of the sequence of blown-up
curves are $(3,14),\ (3,11),\ (3,8),\ (3,5),\ (2,3)$. Applying
inductively the formula (\ref{formula}), one accumulates
contribution to the difference $\mu -\tau_{\textup{min}}$. Actually,
it can be seen that the respective contributions are $15,\ 1,\ 1,\
1,\ 0,\ 0$. Thus $\tau_{\textup{min}}=\mu-18=98-18=80$ which
coincides with the computation of Peraire.
\end{ex*}
The last theorem allow us obtain a formula to compute the minimum Tjurina number in a topological class using the multiplicity sequence.
\begin{cor} Let $L$ a topological class of a singular plane branch determined by the multiplicity sequence $\nu_{(1)},\nu_{(2)},\ldots ,\nu_{(N)},\nu_{(N+1)}=1,\ldots$. The minimal Tjurina number achieved in $L$ is \[\tau_{\min}=\sum_{i=1}^{N} \left (\nu_{(i)}^2+\left [ \frac{\nu_{(i)}}{2}\right ]\left ( \left [\frac{\nu_{(i)}}{2} \right ]-\nu_{(i)}-1\right )-1+p_1(S_{(i)}) \right )
\]
where $S_{(i)}$ denote the curve with multiplicity $\nu_{(i)}$ in the canonical resolution process for a generic curve in $L$.
\end{cor}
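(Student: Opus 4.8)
The plan is to apply Theorem \ref{theo-tau-min} recursively along the canonical resolution and then reduce the resulting telescoping sum to the stated closed form by an elementary algebraic identity. Write $S=S_{(1)}$ and let $S_{(2)},\ldots,S_{(N)}$ be the successive strict transforms occurring in the canonical resolution of a generic curve, so that $S_{(i+1)}$ is the strict transform of $S_{(i)}$, has multiplicity $\nu_{(i+1)}$, and $S_{(N+1)}$ is smooth. Let $L_{(i)}$ be the topological class of $S_{(i)}$, with Milnor number $\mu_{(i)}$ and minimal Tjurina number $\tau_{\min}^{(i)}$; by construction $L_{(i+1)}=\widetilde{L}_{(i)}$ and $\mu_{(1)}=\mu$, $\tau_{\min}^{(1)}=\tau_{\min}$. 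First I would substitute the expression $\mathfrak{i}(S_{(i)})=\left[\frac{\nu_{(i)}}{2}\right]+1-p_1(S_{(i)})$, furnished by Theorem \ref{theo-tau-min}, into the formula of that same theorem so as to read off the contribution of a single blow-up, namely
\[
\mu_{(i)} - \tau_{\min}^{(i)} = \mu_{(i+1)} - \tau_{\min}^{(i+1)} + c_{(i)}, \qquad c_{(i)} = \left(\left[\frac{\nu_{(i)}}{2}\right]-1\right)\left(\nu_{(i)}-\left[\frac{\nu_{(i)}}{2}\right]-1\right)+\left[\frac{\nu_{(i)}}{2}\right]-p_1(S_{(i)}).
\]

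Next I would iterate this identity for $i=1,\ldots,N$. The recursion terminates at the smooth branch $S_{(N+1)}$, for which $\mu_{(N+1)}=\tau_{\min}^{(N+1)}=0$; telescoping therefore yields $\mu-\tau_{\min}=\sum_{i=1}^{N}c_{(i)}$. Combining this with the Milnor number formula \eqref{milnor}, written as $\mu=\sum_{i=1}^{N}\nu_{(i)}(\nu_{(i)}-1)$, gives $\tau_{\min}=\sum_{i=1}^{N}\bigl(\nu_{(i)}(\nu_{(i)}-1)-c_{(i)}\bigr)$. It then remains only to verify the purely algebraic identity
\[
\nu_{(i)}(\nu_{(i)}-1) - c_{(i)} = \nu_{(i)}^2 + \left[\frac{\nu_{(i)}}{2}\right]\left(\left[\frac{\nu_{(i)}}{2}\right]-\nu_{(i)}-1\right) - 1 + p_1(S_{(i)}),
\]
which follows at once after expanding $\left(\left[\frac{\nu_{(i)}}{2}\right]-1\right)\left(\nu_{(i)}-\left[\frac{\nu_{(i)}}{2}\right]-1\right)$ and collecting terms. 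No case analysis on the parity is needed at this last step, since the identity holds formally as a polynomial in $\nu_{(i)}$ and $\left[\frac{\nu_{(i)}}{2}\right]$, summing each per-step contribution into the claimed summand.

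Once Theorem \ref{theo-tau-min} is in hand the computation is routine, so the only point genuinely requiring care is the bookkeeping of the recursion: one must check that each class $L_{(i)}$ with $\nu_{(i)}\geq 2$ is a genuinely singular branch to which Theorem \ref{theo-tau-min} applies, that the invariant $p_1(S_{(i)})$ is indeed the one associated to the characteristic exponents of $S_{(i)}$ (which are determined by the multiplicity sequence), and that the telescoping bottoms out precisely at the first index where the multiplicity drops to $1$. I do not anticipate any serious obstacle beyond this bookkeeping, the class $(9,12,17)$ treated earlier serving as a consistency check.
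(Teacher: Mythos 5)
Your proposal is correct and follows essentially the same route as the paper: both iterate Theorem \ref{theo-tau-min} along the canonical resolution, telescope down to the smooth strict transform (where $\mu-\tau=0$), substitute $\mathfrak{i}(S_{(i)})=\left[\frac{\nu_{(i)}}{2}\right]+1-p_1(S_{(i)})$, and combine with $\mu=\sum_{i=1}^{N}\nu_{(i)}(\nu_{(i)}-1)$ via the same formal algebraic simplification, valid without parity case analysis. Your explicit bookkeeping of the recursion is sound and slightly more careful than the paper's one-line ``applying inductively,'' and your per-step contribution $c_{(i)}$ agrees with the paper's equation (\ref{genericformula}) after expansion (note the paper's intermediate display actually carries a sign typo, $-(\mathfrak{i}(S_{(i)})-1)$ where $+(\mathfrak{i}(S_{(i)})-1)$ is meant, which your version gets right).
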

\begin{proof}
Applying inductively the formula presented in the last theorem and using that $\mathfrak{i}\left(S_{(i)}\right )=\left [ \frac{\nu_{(i)}}{2}\right ]+1-p_1(S_{(i)})$ yields
\[
\tau_{\min} = \mu-\sum_{i=1}^{N} \left (\left (\left [ \frac{\nu_{(i)}}{2}\right ]-1\right )\left ( \nu_{(i)}-\left [\frac{\nu_{(i)}}{2} \right ]-1\right )-(\mathfrak{i}\left(S_{(i)}\right )-1) \right )
\]
\begin{equation}\label{genericformula}
\hspace{0.5cm}  = \mu +\sum_{i=1}^{N} \left (\left [ \frac{\nu_{(i)}}{2}\right ]\left (\left [\frac{\nu_{(i)}}{2} \right ]-\nu_{(i)}-1\right )+\nu_{(i)}-1+p_{1}(S_{(i)}) \right ).
\end{equation}
As $\mu=\sum_{i=1}^{N}\nu_{(i)}\left ( \nu_{(i)}-1\right )$, we get the proof.
\end{proof}
In \cite{Dimca-Greuel}, Dimca
and Greuel present an interesting question about the Tjurina number for curves in a given topological class $L$. More specifically, they ask if $4\tau(S) >3\mu(S)$ for any curve in $L$.\\
As the Tjurina number is semicontinuous in $L$ and we have obtained a formula for the $\tau_{\min}$, we are able to given a lower bound for the Tjurina number in terms of the Milnor number and it answered positively the previous question for the irreducible case.
\begin{cor}\label{dimca}
Let $S$ be a singular irreducible plane curve. Then
\[
\tau\left(S\right)\geq \frac{3}{4}\mu\left(S\right)+\frac{\sqrt{1+4\mu(S)}-1}{8}.
\] In particular, $4\tau(S) >3\mu(S)$.
\end{cor}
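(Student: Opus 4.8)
The plan is to reduce the statement to the minimal Tjurina number and then to an elementary inequality between the multiplicities of the resolution. Since $\tau_{\min}$ is by definition the minimum of $\tau$ over the topological class $L=L(S)$, one has $\tau(S)\ge\tau_{\min}$, so it suffices to prove the bound for $\tau_{\min}$. Writing $\nu_i:=\nu_{(i)}$ for the multiplicity sequence of the canonical resolution and recalling $\mu=\sum_{i=1}^{N}\nu_i(\nu_i-1)$, I would start from the formula (\ref{genericformula}) and distribute the term $\frac14\mu$ over the summands, obtaining
\[
\tau_{\min}-\frac34\mu=\sum_{i=1}^{N}U_i,\qquad U_i:=\frac14\nu_i(\nu_i-1)+\left[\frac{\nu_i}{2}\right]\left(\left[\frac{\nu_i}{2}\right]-\nu_i-1\right)+\nu_i-1+p_1(S_{(i)}).
\]

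The second step is a direct simplification of $U_i$. Splitting into $\nu_i$ even and odd, a short computation gives $U_i=\frac12\left[\frac{\nu_i}{2}\right]-1+p_1(S_{(i)})$ for $\nu_i$ even and $U_i=\frac12\left[\frac{\nu_i}{2}\right]+p_1(S_{(i)})$ for $\nu_i$ odd. From these I would prove the uniform per-term estimate
\[
U_i\ge\frac{\nu_i-1}{4}.
\]
For $\nu_i$ odd this is immediate from $p_1\ge0$. For $\nu_i$ even it is equivalent to $p_1(S_{(i)})\ge1$, which I would check by going through the four cases describing $p_1$ in Theorem \ref{theo-tau-min}: the subcase $n=2$ and the subcase $\beta_1$ even both give $p_1=1$, while the two remaining subcases give $\frac{n-1}{2}$ with $n$ odd (hence $n\ge3$) and $\frac{n-2}{2}$ with $n$ even and $n>2$ (the value $n=2$ already being covered), so that $p_1\ge1$ in every case.

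Summing the per-term estimate over $i$ gives $\tau_{\min}-\frac34\mu\ge\frac14\sum_{i}(\nu_i-1)$. Put $P:=\sum_{i=1}^{N}(\nu_i-1)$; because the multiplicity sequence is non-increasing and the sum stops at the last index with $\nu_{(i)}\ge2$, every $\nu_i-1\ge1$. It then remains to show $\frac{P}{4}\ge\frac{\sqrt{1+4\mu}-1}{8}$, that is $2P+1\ge\sqrt{1+4\mu}$. Both sides being nonnegative, I square to get the equivalent inequality $(2P+1)^2\ge1+4\mu$; expanding and using $\mu=\sum_i(\nu_i-1)^2+P$, this collapses to $P^2\ge\sum_i(\nu_i-1)^2$, i.e. $\big(\sum_i(\nu_i-1)\big)^2\ge\sum_i(\nu_i-1)^2$, which holds since all $\nu_i-1\ge0$. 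Chaining the inequalities yields $\tau(S)\ge\tau_{\min}\ge\frac34\mu+\frac{\sqrt{1+4\mu}-1}{8}$; as $S$ is singular we have $\mu\ge1$, so the last term is strictly positive and $4\tau(S)>3\mu(S)$ follows.

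The main obstacle is the even-multiplicity estimate $p_1(S_{(i)})\ge1$: this is the only point where the bound $p_1\ge0$ is insufficient and one must invoke the explicit case analysis of Theorem \ref{theo-tau-min}, taking care that the $n=2$ subcase is exactly what prevents $p_1$ from vanishing in the two borderline subcases. Everything else---the passage to $\tau_{\min}$, the rewriting of (\ref{genericformula}) as $\sum_i U_i$, and the final elementary inequality $(\sum x_i)^2\ge\sum x_i^2$ for nonnegative reals---is routine bookkeeping.
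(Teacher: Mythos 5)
Your proposal is correct and follows essentially the same route as the paper's proof: reduction to $\tau_{\min}$, the per-term analysis of the generic formula (\ref{genericformula}) using $p_{1}\geq 1$ for even multiplicities (via the case list of Theorem \ref{theo-tau-min}) and $p_{1}\geq 0$ for odd ones -- your estimate $U_i\geq\frac{\nu_{(i)}-1}{4}$ is exactly the paper's bound $4\tau_{\min}-3\mu\geq\sum_{i}(\nu_{(i)}-1)$ divided by four -- and the final elementary inequality $\bigl(\sum_{i}(\nu_{(i)}-1)\bigr)^{2}\geq\sum_{i}(\nu_{(i)}-1)^{2}$. The only cosmetic difference is the last step, where you verify $2P+1\geq\sqrt{1+4\mu}$ directly by squaring, whereas the paper derives the quadratic inequality $t^{2}+t-\mu\geq 0$ in $t=4\tau_{\min}-3\mu$ and solves it; these are the same computation arranged differently.
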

\begin{proof}
We denote $\mu=\mu(S)$.
It is sufficient to show the inequality for the $\tau_{\min}$.\\
By (\ref{genericformula}), the relation below holds
\[ 4\tau_{\min}-3\mu= \mu +4\sum_{i=1}^{N} \left (\left [ \frac{\nu_{(i)}}{2}\right ]\left (\left [\frac{\nu_{(i)}}{2} \right ]-\nu_{(i)}-1\right )+\nu_{(i)}-1+p_{1}(\nu_{(i)}) \right ).
\]
Now, using that $\mu=\sum_{i=1}^{N}\nu_{(i)}\left (
\nu_{(i)}-1\right )$ and $4\left [ \frac{\nu_{(i)}}{2}\right ]\left
(\left [\frac{\nu_{(i)}}{2} \right ]-\nu_{(i)}-1\right
)=-\nu_{(i)}^2-2\nu_{(i)}+\delta_i
$
with $\delta_i =0$ if $\nu_{(i)}$ is even and $\delta_i=3$ if
$\nu_{(i)}$ is odd, we obtain
\[ 4\tau_{\min}-3\mu= \sum_{i=1}^{N} \left ( \nu_{(i)}+\delta_i +4(p_{1}(S_{(i)})-1)\right ).
\]
Now, by Theorem \ref{theo-tau-min} we have that:
\begin{itemize}
\item if $\nu_{(i)}$ is even, then $p_1(S)\geq 1$ and $\nu_{(i)}+0 +4(p_{1}(\nu_{(i)})-1)\geq \nu_{(i)}$,
\item if $\nu_{(i)}$ is odd, then $p_1(S)\geq 0$ and $\nu_{(i)}+3 +4(p_{1}(\nu_{(i)})-1)\geq \nu_{(i)}-1.$
\end{itemize}
So, the following inequality follows
\begin{equation}\label{inequality}
4\tau_{\min}-3\mu\geq \sum_{i=1}^{N}(\nu_{(i)}-1).
\end{equation}
As $\mu=\sum_{i=1}^{N}(\nu_{(i)}-1)^2+\sum_{i=1}^{N}\left (
\nu_{(i)}-1\right )$ we get $4\tau_{\min}-3\mu\geq
\mu-\sum_{i=1}^{N}(\nu_{(i)}-1)^2.$ Using (\ref{inequality}), that
is, $-(4\tau_{\min}-3\mu)^2\leq -\left (
\sum_{i=1}^{N}(\nu_{(i)}-1)\right )^2\leq
-\sum_{i=1}^{N}(\nu_{(i)}-1)^2$, we obtain $4\tau_{\min}-3\mu\geq
\mu-(4\tau_{\min}-3\mu)^2$ and consequently
\[
\tau\left(S\right)\geq\tau_{\min}\geq \frac{3}{4}\mu+\left (\frac{-1+\sqrt{1+4\mu}}{8} \right ).
\]
\end{proof}

\begin{ex*}[\textbf{6}] Let us consider the topological
class $L$ determined by the characteristic exponents $(141,142)$.
The Milnor number for any curve in $L$ is
$\mu=(141-1)(142-1)=19740$. Using the lower bound presented in the
las result we obtain $\tau_{\min}\geq 14840$. For this topological
class it follows by the Delorme result ({\it cf.}
\cite{Delorme1978}) that $\tau_{\min}=14910$.
\end{ex*}

While we submit the first version of this paper to Arxiv, we discover that, at the same time, a positive answer for the Dimca-Greuel question was obtained by Alberich-Carrami\~nana {\it et al.} and published in Arxiv \cite{dimca-conj} a few days before. Although the methods are a bit different, the key ingredient is still the formula for the generic dimension of the moduli space obtained in \cite{YoyoBMS}.

\bibliographystyle{plain}
\bibliography{Bibliographie}

\vspace{1cm}

\begin{tabular}{lcl}
Genzmer, Y. & & Hernandes, M. E. \\
{\it yohann.genzmer$@$math.univ-toulouse.fr} & & {\it mehernandes$@$uem.br}
\end{tabular}

\end{document}